\tikzstyle{block} = [draw, fill=gray!20, rectangle, 
\tikzstyle{sum} = [draw, fill=gray!20, circle, node distance=1.5cm]
\tikzstyle{input} = [coordinate]
\tikzstyle{output} = [coordinate]
\tikzstyle{pinstyle} = [pin edge={to-,thin,black}]
\DeclareMathOperator{\rank}{rank}
\DeclareMathOperator{\tr}{tr}
\DeclareMathOperator{\rad}{rad}
\DeclareMathOperator{\cent}{cent}
\DeclareMathOperator{\diam}{diam}
\DeclareMathOperator{\aff}{aff}
\newcommand{\norm}[1]{\left|#1\right|}
\theoremstyle{thmstyleone}
\newtheorem{theorem}{Theorem}
\newtheorem{definition}[theorem]{Definition}
\newtheorem{lemma}[theorem]{Lemma}
\newtheorem{proposition}[theorem]{Proposition}
\begin{document}

\title[Chebyshev Centers and Radii for Sets Induced by Quadratic Matrix Inequalities]{Chebyshev Centers and Radii for Sets Induced by Quadratic Matrix Inequalities}


\author*[1]{\fnm{Amir} \sur{Shakouri}}\email{a.shakouri@rug.nl}

\author[1]{\fnm{Henk} \sur{J. van Waarde}}\email{h.j.van.waarde@rug.nl}

\author[1]{\fnm{M. Kanat} \sur{Camlibel}}\email{m.k.camlibel@rug.nl}

\affil[1]{\orgdiv{Bernoulli Institute for Mathematics, Computer Science and Artificial Intelligence}, \orgname{University of Groningen}, \orgaddress{\street{Nijenborgh 9}, \city{Groningen}, \postcode{9747AG}, \country{The Netherlands}}}


\abstract{This paper studies sets of matrices induced by quadratic inequalities. In particular, the center and radius of a smallest ball containing the set, called a \emph{Chebyshev center} and the \emph{Chebyshev radius}, are studied. In addition, this work studies the \emph{diameter} of the set, which is the farthest distance between any two elements of the set. Closed-form solutions are provided for a Chebyshev center, the Chebyshev radius, and the diameter of sets induced by quadratic matrix inequalities (QMIs) with respect to arbitrary unitarily invariant norms. Examples of these norms include the Frobenius norm, spectral norm, nuclear norm, Schatten $p$-norms, and Ky Fan $k$-norms. In addition, closed-form solutions are presented for the radius of the largest ball \emph{within} a QMI-induced set. Finally, the paper discusses applications of the presented results in data-driven modeling and control. }

\keywords{Quadratic matrix inequality, Chebyshev center, Chebyshev radius, unitarily invariant norm, data-driven control, system identification.}



\maketitle

\section{Introduction}

Quadratic matrix inequalities (QMIs) are ubiquitous in systems and control theory. Prominent examples are the Lyapunov and the algebraic Riccati inequality. The study of QMI-induced sets is classically motivated by the design of optimal and robust controllers \cite{willems1971least,stoorvogel1990quadratic,scherer1999lecture,scherer2001lpv,skelton2017unified,scherer2006matrix}, and recently, has seen a renewed interest due to its applications in data-driven modeling and control \cite{van2020noisy,miller2022data,rapisarda2023orthogonal,van2023informativity}.  
Latest studies on certain types of QMI-induced sets have led to the development of matrix versions of several well-known results such as Yakubovich's S-lemma \cite{van2020noisy,van2023quadratic} and Finsler's lemma \cite{van2021matrix,meijer2024unified}. Research on QMI-induced sets is also relevant for matrix regression and system identification where the noise model may be given by quadratic inequalities. 

Given a compact set of matrices and a norm, the center and radius of a smallest ball containing the set are referred to as a \emph{Chebyshev center} and the \emph{Chebyshev radius}, respectively. In addition, the farthest distance between two elements within the set is referred to as the \emph{diameter} of the set. Such notions have applications in the approximation of matrices within QMI-induced sets. For instance, in the context of data-driven modeling of dynamical systems, given that the system matrices are unknown but belong to a QMI-induced set obtained from noisy data \cite{van2020noisy}, a Chebyshev center is an estimation for the \emph{true} system matrices, and the Chebyshev radius is the worst-case error of such an estimation. Since a Chebyshev center is a point that has the least worst-case error, it is also called a \emph{best worst-case} estimation \cite{beck2007regularization}. In this case, one may consider a Chebyshev center as the nominal system and implement tools from robust control theory to design controllers that stabilize all systems in a ball
around the nominal system. The reader can refer to \cite{alimov2021geometric} and the references therein for more details on Chebyshev centers and their applications in other fields.

The complexity of finding a Chebyshev center and the Chebyshev radius depends on the characteristics of the set and the given norm. For instance, if the set is an ellipsoid within a Euclidean space, equipped with the Euclidean norm, then the Chebyshev center is unique and is simply the center of the ellipsoid with the Chebyshev radius being the length of the largest semi-axis. However, for an intersection of ellipsoids, there are in general no closed-form solutions and existing algorithms only converge to a proximity of the Chebyshev center \cite{beck2007regularization,eldar2008minimax}.  The effect of the given norm on the computational complexity of finding the Chebyshev radius is evident as a norm can itself be NP-hard to compute (such as matrix $p$-norms when $p\neq 1,2,\infty$ \cite{hendrickx2010matrix}). To find a Chebyshev center, there are a few nontrivial cases where a polynomial time algorithm exists for specific norms. For instance, finding a Chebyshev center for polyhedra can be reduced to a linear program \cite{alimov2021geometric}, or, when the set is finite, one may find the smallest enclosing ball in linear time \cite{welzl2005smallest}.  In the absence of polynomial time algorithms for finding a Chebyshev center, approximate solutions may be sought. For example, for the intersection of two ellipsoids, a relaxed Chebyshev center can be found by solving a semi-definite program \cite{beck2007regularization}, which is equal to the true Chebyshev center if the set is defined on the complex field. To the authors' knowledge, a full characterization of the Chebyshev centers and radii for QMI-induced sets with respect to arbitrary unitarily invariant matrix norms is still missing in the existing literature.

In this paper, we present closed-form solutions for a Chebyshev center, the Chebyshev radius, and the diameter of a certain class of compact QMI-induced sets. In addition, we study the radius of a largest ball within a QMI-induced set, and for the cases where the set has an empty interior, we present the radius of a largest lower-dimensional ball contained in the set. All the provided solutions are with respect to arbitrary unitarily invariant matrix norms, which are expressed in terms of symmetric gauge functions \cite{marshall2011inequalities}. Examples of such norms are the Schatten $p$-norms and the Ky Fan $k$-norms, which in particular, include spectral, nuclear, and Frobenius norms. The freedom over the choice of the norm makes it possible to approximate a QMI-induced set with respect to the norm that is most relevant to the target application. 

The presented results in this paper are applied to the modeling and control of unknown linear time-invariant systems based on noisy input-output data. The noise is assumed to be unknown but belongs to a known bounded set \cite{fogel1979system,belforte1990parameter,fogel1982value,giarre1997model,garulli1999tight,bai1999bounded}. In this setting, the input-output data and knowledge about the noise give rise to the \emph{set of data-consistent systems}, which includes all systems that could have generated the data. In the case that the set of data-consistent systems is bounded, a \emph{set-membership identification} problem seeks a solution to minimize the worst-case estimation error, that is, to find a Chebyshev center and the Chebyshev radius for the set of data-consistent systems \cite{milanese1991optimal,milanese1985optimal,eldar2008minimax,wu2013new}. In this setting, the Chebyshev radius represents the identification accuracy. This accuracy needs to be computed with respect to a given norm that is most suitable for the target application. For instance, the application of the nuclear norm in system identification has been explored in \cite{liu2010interior}. The application of the spectral norm in the design of robust controllers has been studied in \cite{khargonekar1990robust}. The relevance of the Frobenius norm in characterizing uncertainties in the system parameters has been shown in \cite{lee1996quadratic,boukas1999h}. Nevertheless, except for some well-known cases, such as minimizing the Frobenius norm of the data error via the least-squares problem, system identification seldom offers closed-form solutions. This motivates exploring cases for which a closed-form solution exists, which may lead to valuable information on how the input-output data affect the identification error with respect to different norms. This, in turn, can bridge the gap between direct and indirect data-driven control. 

The remainder of this paper is organized as follows. Section \ref{sec:II} is devoted to preliminaries. Section \ref{sec:IV} includes the main results and Section \ref{sec:III} discusses their applications in data-driven modeling and control. Finally, Section \ref{sec:VII} concludes the paper. 

\section{Preliminaries}
\label{sec:II}

\subsection{Notation}


For $a,b\in\mathbb{Z}$ with $a\leq b$, let $[a,b]\coloneqq\{x\in\mathbb{Z}:a\leq x\leq b\}$. Let $\mathbb{R}_+^n$ denote the set of $n$-dimensional real vectors with nonnegative entries. We define \linebreak $\mathbb{R}^n_{\downarrow}\coloneqq\{x\in\mathbb{R}_+^n:x_i\geq x_{i+1},i=1,\ldots,n\}$. For $x\in\mathbb{R}^n$ and $k\leq n$ we define the vector containing the first $k$ entries of $x$ as $x_{[k]}\coloneqq\begin{bmatrix}
x_1 & \cdots & x_k
\end{bmatrix}^\top$, and for $k>n$ we define $x_{[k]}\coloneqq\begin{bmatrix}
x^\top & 0
\end{bmatrix}^\top\in\mathbb{R}^k$. For two vectors $x,y\in\mathbb{R}^n$, we say $y$ \emph{weakly majorizes} $x$, denoted by $x\prec_w y$, if $\sum_{i=1}^k x_i\leq \sum_{i=1}^k y_i$ for all $k\in[1,n]$. We define $e_{1,n}\coloneqq\begin{bmatrix}
1 & 0 & \cdots & 0
\end{bmatrix}^\top\in\mathbb{R}^n$, which is simply denoted by $e_1$ when its size is clear from the context.

The vector of singular values of matrix $M\in\mathbb{C}^{n\times m}$, arranged in a nonincreasing order, is denoted by $\sigma(M)\in\mathbb{R}^{\min\{n,m\}}_{\downarrow}$. Let $\sigma_*(M)$ denote the smallest nonzero singular value of $M\in\mathbb{C}^{n\times m}$ if $M\neq 0$ and $\sigma_*(M)=0$ if $M=0$. We denote the kernel of matrix $M\in\mathbb{R}^{n\times m}$ by $\ker M\coloneqq \{x\in\mathbb{R}^m:Mx=0\}$. The Moore-Penrose pseudo-inverse of a real matrix $M$ is denoted by $M^\dagger$. Moreover, we denote the Hadamard (or Schur) product of two matrices $M,N\in\mathbb{R}^{n\times m}$ by $M\circ N$. 

Let $\mathbb{S}^{n}$ denote the set of $n\times n$ symmetric matrices. A matrix $M\in\mathbb{S}^{n}$ is called \emph{positive definite} (resp., \emph{positive semi-definite}) and denoted by $M>0$ (resp., $M\geq 0$) if all its eigenvalues are positive (resp., nonnegative). By $M<0$ (resp., $M\leq 0$) we imply that $-M>0$ (resp., $-M\geq 0$).

A norm $\norm{\ \cdot\ }$ on $\mathbb{R}^{p\times q}$ is said to be \emph{essentially strictly convex} if the unit ball in $\mathbb{R}^{p\times q}$ with respect the norm is a strictly convex set, or equivalently, $X_1,X_2\in\mathbb{R}^{p\times q}$ and $\norm{X_1+X_2}=\norm{X_1}+\norm{X_2}$ imply $X_1=\alpha X_2$ for some $\alpha\geq0$ (cf. \cite[p. 106]{holmes2006course}, \cite[eq. 2.8]{ziketak1988characterization}). 

For a set of matrices $\mathcal{X}\subseteq\mathbb{R}^{p\times q}$ and matrices $X_0,M,N$ of appropriate sizes, we define $X_0+\mathcal{X}\coloneqq\{X_0+X:X\in\mathcal{X}\}$, $\mathcal{X}-\mathcal{X}\coloneqq\{X_1-X_2:X_1,X_2\in\mathcal{X}\}$, and $M \mathcal{X}N\coloneqq \{M XN:X\in\mathcal{X}\}$. We denote the affine hull of the set $\mathcal{X}$ \linebreak by $\aff\mathcal{X}\coloneqq\left\{\sum_{i=1}^n\alpha_iX_i:X_i\in\mathcal{X},\alpha_i\in\mathbb{R},n\in\mathbb{N},\sum_{i=1}^n \alpha_i=1\right\}$. 

\subsection{Center, radius and diameter of sets}

Let $\mathcal{X}\subset\mathbb{R}^{p\times q}$ be a compact set and let $\norm{\ \cdot\ }$ be a norm on $\mathbb{R}^{p\times q}$. We define
\begin{equation}
\label{eq:diam}
\diam \mathcal{X}\coloneqq \max_{\Delta\in\mathcal{X}-\mathcal{X}} \norm{\Delta} 
\end{equation} 
as the \emph{diameter} of $\mathcal{X}$, 
\begin{equation}
\label{eq:cheb}
\rad \mathcal{X}\coloneqq \min_{C\in\mathbb{R}^{p\times q}} \max_{X\in\mathcal{X}} \norm{C-X}
\end{equation}
as the \emph{Chebyshev radius} of $\mathcal{X}$, and
\begin{equation}
\label{eq:cent}
\cent\mathcal{X}\coloneqq \left\{C\in\mathbb{R}^{p\times q}:\norm{C-X}\leq\rad\mathcal{X}\ \text{for all}\  X\in\mathcal{X}\right\}
\end{equation}
as the \emph{set of Chebyshev centers}\footnote{For sets that are not compact, one can define the Chebyshev centers and radii by taking the infimum and supremum instead of the maximum and minimum, see \cite{alimov2019chebyshev}.}\textsuperscript{,}\footnote{Note that Chebyshev centers and radii are defined differently in some references such as \mbox{\cite[Chapter 
8.5.2]{boyd2004convex}}, where the Chebyshev center is referred to as the center of a largest ball that lies within the set. } of $\mathcal{X}$. Due to compactness of $\mathcal{X}$, $\diam\mathcal{X}$ and $\rad\mathcal{X}$ are well-defined, and $\cent\mathcal{X}$ is nonempty. 

Based on the above definitions, we have the following results. 
\begin{proposition}
\label{prop:chebuniq}
Let $\mathcal{X}\subset\mathbb{R}^{p\times q}$ be a compact set. Then:
\begin{enumerate}[label=\normalfont{(\alph*)},ref=\ref{prop:chebuniq}(\alph*)]
    \item\label{prop:chebuniq-a} $\tfrac{1}{2}\diam\mathcal{X}\leq\rad\mathcal{X}\leq\diam\mathcal{X}$.
    \item\label{prop:chebuniq-b} $\cent \mathcal{X}$ is compact and convex with no interior points.
    \item\label{prop:chebuniq-bc} For every $X_0\in\mathbb{R}^{p\times q}$ we have  $\diam(X_0+\mathcal{X})=\diam\mathcal{X}$, $\rad(X_0+\mathcal{X})=\rad\mathcal{X}$, and $\cent(X_0+\mathcal{X})=X_0+\cent\mathcal{X}$.
    \item\label{prop:chebuniq-c} If the norm is essentially strictly convex, then the set $\mathcal{X}$ has a unique Chebyshev center.
\end{enumerate}
\end{proposition}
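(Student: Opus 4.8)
The plan is to prove the four items separately, using only compactness of $\mathcal{X}$ (to guarantee attainment of the relevant maxima) together with elementary norm and convexity arguments. For (a), the upper bound follows by taking any fixed $X_0\in\mathcal{X}$ as a trial center: since $X_0-X\in\mathcal{X}-\mathcal{X}$ we get $\|X_0-X\|\le\diam\mathcal{X}$ for all $X\in\mathcal{X}$, hence $\rad\mathcal{X}\le\diam\mathcal{X}$; the lower bound follows by fixing a Chebyshev center $C$ and writing $\|X_1-X_2\|\le\|X_1-C\|+\|C-X_2\|\le 2\rad\mathcal{X}$ for all $X_1,X_2\in\mathcal{X}$, then maximizing over $X_1,X_2$. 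For (c), all three identities reduce to the translation substitution $C\mapsto C+X_0$: one has $(X_0+\mathcal{X})-(X_0+\mathcal{X})=\mathcal{X}-\mathcal{X}$, which gives the diameter identity at once, while $\max_{X\in\mathcal{X}}\|C-X_0-X\|$ is, as a function of $C$, the $X_0$-translate of $\max_{X\in\mathcal{X}}\|C-X\|$, yielding both the radius identity and --- by comparing the corresponding sublevel sets at level $\rad\mathcal{X}$ --- the center identity.

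For (b), I would write $\cent\mathcal{X}=\bigcap_{X\in\mathcal{X}}\{C\in\mathbb{R}^{p\times q}:\|C-X\|\le\rad\mathcal{X}\}$, an intersection of closed balls, so it is closed and convex; it is bounded since it lies inside the ball of radius $\rad\mathcal{X}$ about any fixed point of $\mathcal{X}$, hence compact. The only part of (b) needing an argument is the absence of interior points. If $\rad\mathcal{X}=0$ then, by (a), $\diam\mathcal{X}=0$, so $\mathcal{X}$ and therefore $\cent\mathcal{X}$ are singletons, which have empty interior. If $\rad\mathcal{X}>0$, suppose some $C$ were interior to $\cent\mathcal{X}$; choose $X^*\in\mathcal{X}$ with $\|C-X^*\|=\max_{X\in\mathcal{X}}\|C-X\|=\rad\mathcal{X}$ and perturb $C$ slightly along the unit direction $(C-X^*)/\|C-X^*\|$ to obtain a point $C'$ still in $\cent\mathcal{X}$ but with $\|C'-X^*\|>\rad\mathcal{X}$, a contradiction.

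For (d), let $C_1,C_2\in\cent\mathcal{X}$; the goal is $C_1=C_2$. The case $\rad\mathcal{X}=0$ is trivial by the singleton argument above, so assume $\rad\mathcal{X}>0$. By (b) the midpoint $\bar C=\tfrac12(C_1+C_2)$ again lies in $\cent\mathcal{X}$, so $\max_{X\in\mathcal{X}}\|\bar C-X\|=\rad\mathcal{X}$ (the reverse inequality being automatic since $\max_{X\in\mathcal{X}}\|C-X\|\ge\rad\mathcal{X}$ for every $C$), and one may pick $X^*\in\mathcal{X}$ attaining this maximum. Then
\[
\rad\mathcal{X}=\bigl\|\tfrac12(C_1-X^*)+\tfrac12(C_2-X^*)\bigr\|\le\tfrac12\|C_1-X^*\|+\tfrac12\|C_2-X^*\|\le\rad\mathcal{X},
\]
so equality holds throughout: $\|C_1-X^*\|=\|C_2-X^*\|=\rad\mathcal{X}$ and $\|(C_1-X^*)+(C_2-X^*)\|=\|C_1-X^*\|+\|C_2-X^*\|$. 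Essential strict convexity of the norm then forces $C_1-X^*=\alpha(C_2-X^*)$ for some $\alpha\ge0$, and taking norms gives $\alpha=1$, whence $C_1=C_2$.

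I do not expect a genuinely hard step here; the argument is elementary. The point requiring the most care is (d): one must dispose of the degenerate case $\rad\mathcal{X}=0$, then use (b) to ensure the midpoint is itself a Chebyshev center so that the farthest point $X^*$ realizes $\rad\mathcal{X}$ exactly rather than merely bounding it, and finally apply essential strict convexity in its equality-in-the-triangle-inequality form together with a norm computation to exclude $\alpha\neq1$. A lesser subtlety is the ``no interior points'' step of (b), where the $\rad\mathcal{X}=0$ case must be separated out and, otherwise, compactness is needed to produce an actual maximizer $X^*$ to push away from.
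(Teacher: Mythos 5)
Your proof is correct and follows essentially the same route as the paper's: the same triangle-inequality bounds for (a), the same translation substitution for (c), the same push-away-from-a-farthest-point argument for the absence of interior points, and the same midpoint-plus-equality-in-the-triangle-inequality argument for (d). Your only departures are cosmetic refinements --- writing $\cent\mathcal{X}$ as an intersection of closed balls to get closedness and convexity at once, and explicitly separating the degenerate case $\rad\mathcal{X}=0$, which the paper's interior-point argument tacitly assumes away.
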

\begin{proof}
(a) Observe that from the triangle inequality $\tfrac{1}{2}\norm{X_1-X_2}\leq \tfrac{1}{2}\norm{C-X_1}+\tfrac{1}{2}\norm{C-X_2}$. Therefore, taking $C$ to be a Chebyshev center, we have $\tfrac{1}{2}\norm{X_1-X_2}\leq \rad\mathcal{X}$ for all \mbox{$X_1,X_2\in\mathcal{X}$}, which implies $\tfrac{1}{2}\diam\mathcal{X}\leq\rad\mathcal{X}$. The second inequality is obvious from the definitions \eqref{eq:diam} and \eqref{eq:cheb}. 

(b) To prove the boundedness of $\cent\mathcal{X}$, we note that for any $C\in\cent\mathcal{X}$ and $X\in\mathcal{X}$ we have $\norm{C}\leq \norm{C-X}+\norm{X}\leq \rad\mathcal{X}+\norm{X}$. Since $\mathcal{X}$ is bounded, we see that $\cent\mathcal{X}$ is bounded. To prove closedness, consider a sequence $C_i\in\cent\mathcal{X}$, $i\in\mathbb{N}$, with $\lim_{i\rightarrow\infty} C_i=C$. Let $X\in\mathcal{X}$. Note that $\norm{C-X}\leq\norm{C-C_i}+\norm{C_i-X}\leq \norm{C-C_i}+\rad\mathcal{X}$. Take the limit as $i\rightarrow \infty$ to conclude that $\norm{C-X}\leq \rad\mathcal{X}$. Thus, $C\in\cent\mathcal{X}$ and therefore $\cent\mathcal{X}$ is closed. 
To prove convexity, suppose that $C_1,C_2\in\cent\mathcal{X}$. Then, by the triangle inequality, we have the following for any $\alpha\in[0,1]$:
\begin{equation}
\label{eq:prop:chebuniq-1}
\norm{\alpha C_1+(1-\alpha)C_2-X}\leq\alpha\norm{C_1-X}+(1-\alpha)\norm{C_2-X}\leq\rad\mathcal{X}.
\end{equation}
Therefore, for any $C_1,C_2\in\cent\mathcal{X}$ and any $\alpha\in[0,1]$ we have $\alpha C_1+(1-\alpha)C_2\in\cent\mathcal{X}$, which implies $\cent\mathcal{X}$ is convex. Now, we prove that $\cent\mathcal{X}$ has no interior points. Assume, on the contrary, that the interior of $\cent\mathcal{X}$ is nonempty. Let $C_1$ belong to the interior of $\cent\mathcal{X}$. Let $X\in\mathcal{X}$ be such that $|C_1-X|=\rad\mathcal{X}$. Since the interior of $\cent\mathcal{X}$ is nonempty, there exists a sufficiently small $\alpha>0$ such that $C_2=C_1+\alpha(C_1-X)\in\cent\mathcal{X}$. This implies that $|C_2-X|=(\alpha+1)|C_1-X|=(\alpha+1)\rad\mathcal{X}$. Since $\rad\mathcal{X}>0$, we have $|C_2 - X| > \rad \mathcal{X}$, meaning that $C_2$ is not a Chebyshev center, thus resulting in a contradiction. Therefore, $\cent\mathcal{X}$ does not have an interior point.

(c) For the diameter, the proof immediately follows from the fact that \mbox{$(X_0+\mathcal{X})-(X_0+\mathcal{X})=\mathcal{X}-\mathcal{X}$}. For the radius, we have
\begin{equation}
\rad (X_0+\mathcal{X})= \min_{C\in\mathbb{R}^{p\times q}} \max_{X\in\mathcal{X}} \norm{C-X-X_0}=\min_{C\in\mathbb{R}^{p\times q}+X_0} \max_{X\in\mathcal{X}} \norm{C-X},
\end{equation}
which is equal to $\rad \mathcal{X}$ as $\mathbb{R}^{p\times q}+X_0=\mathbb{R}^{p\times q}$. For the set of centers, we have 
\begin{equation}
\begin{split}
\cent(X_0+\mathcal{X})&=\{C:\norm{C-X}\leq\rad(X_0+\mathcal{X}) \ \text{for all}\  X\in X_0+\mathcal{X}\} \\
&= \{C:\norm{C-X-X_0}\leq\rad\mathcal{X} \ \text{for all}\  X\in\mathcal{X}\}\\
&=\{C+X_0:\norm{C-X}\leq\rad\mathcal{X} \ \text{for all}\   X\in\mathcal{X}\}=X_0+\cent\mathcal{X}.
\end{split}
\end{equation}

(d) Suppose that $C_1,C_2\in\cent\mathcal{X}$ and the norm $\norm{\ \cdot\ }$ is essentially strictly convex. We show that $C_1=C_2$. For this, recall from part (b) that $\cent\mathcal{X}$ is convex. Thus, we have $\tfrac{1}{2}(C_1+C_2)\in\cent\mathcal{X}$. Hence, there exists an $X\in\mathcal{X}$ such that
\begin{equation}
\label{eq:pf_equal_rad}
\norm{\tfrac{1}{2}(C_1+C_2)-X}=\rad\mathcal{X}.
\end{equation}
Moreover, we observe that $\norm{C_i-X}\leq\rad\mathcal{X}$ for $i=1,2$. Thus, it follows from the triangle inequality that
\begin{equation}
\label{eq:pf_inequal_rad}
\norm{\tfrac{1}{2}(C_1+C_2)-X}\leq\tfrac{1}{2}\norm{C_1-X}+\tfrac{1}{2}\norm{C_2-X}\leq\rad\mathcal{X}.
\end{equation}
From \eqref{eq:pf_equal_rad} and \eqref{eq:pf_inequal_rad} we have
\begin{equation}
\label{eq:prop:chebuniq-2}
\norm{\tfrac{1}{2}(C_1-X)+\tfrac{1}{2}(C_2-X)}=\tfrac{1}{2}\norm{C_1-X}+\tfrac{1}{2}\norm{C_2-X}=\rad\mathcal{X}.
\end{equation}
Since we have $\norm{C_i-X}\leq\rad\mathcal{X}$ for $i=1,2$, it follows from \eqref{eq:prop:chebuniq-2} that 
\begin{equation}
\label{eq:prop:chebuniq-3}
\norm{C_1-X}=\norm{C_2-X}=\rad\mathcal{X}.
\end{equation}
As the norm is essentially strictly convex, \eqref{eq:prop:chebuniq-2} implies that $C_1-X=\alpha(C_2-X)$ for some $\alpha\geq 0$. It follows now from \eqref{eq:prop:chebuniq-3} that $\alpha=1$. This implies that $C_1=C_2$.
\end{proof}

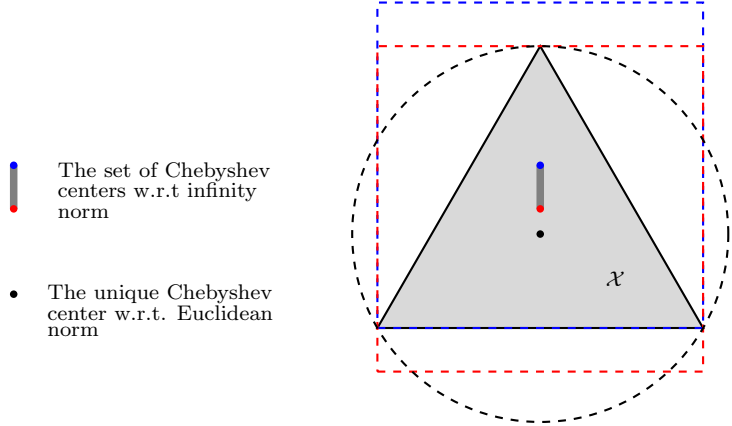
\begin{figure}[h]
    \centering
    \begin{tikzpicture}[thick, scale=1]
    \draw[fill=gray!30,,draw=black] (-2.165,-1.25) -- (90:2.5) -- (2.165,-1.25) -- cycle;
    \draw[blue,dashed] (-2.165,-1.25) -- (2.165,-1.25) -- (2.165,3.08) -- (-2.165,3.08) -- (-2.165,-1.25) ;
    \draw[red,dashed] (-2.165,-1.83) -- (2.165,-1.83) -- (2.165,2.5) -- (-2.165,2.5) -- (-2.165,-1.83) ;
    \draw[dashed] (0,0) circle (2.5cm);
    \draw[fill=black,draw=none] (0,0) circle (0.5mm);
    \draw[gray,line width=1mm] (0,0.335) -- (0,0.915) ;
    \draw[fill=blue,draw=none] (0,0.915) circle (0.5mm);
    \draw[fill=red,draw=none] (0,0.335) circle (0.5mm);
    \draw[fill=black,draw=none] (-7,-0.8) circle (0.5mm);
    \node[align=left] at (-5,-0.9) {\\ \footnotesize The unique Chebyshev \vspace{-0.2cm} \\ \footnotesize center w.r.t. Euclidean \vspace{-0.2cm} \\ \footnotesize norm};
    \draw[gray,line width=1mm] (-7,0.335) -- (-7,0.915) ;
    \draw[fill=blue,draw=none] (-7,0.915) circle (0.5mm);
    \draw[fill=red,draw=none] (-7,0.335) circle (0.5mm);
    \node[align=left] at (-5,0.7) {\\ \footnotesize The set of Chebyshev\vspace{-0.2cm} \\ \footnotesize  centers w.r.t infinity\vspace{-0.2cm} \\ \footnotesize  norm};
    \node[align=center] at (1,-0.6) {\footnotesize $\mathcal{X}$};
    \end{tikzpicture}
    \caption{The Chebyshev radius and the set of Chebyshev centers for the closed area defined by an equilateral triangle in terms of Euclidean and infinity norms.}
    \label{fig:1}
\end{figure}

As an example, consider the set $\mathcal{X}\subset\mathbb{R}^2$ to be the closed shaded area defined by an equilateral triangle as depicted in Fig. \ref{fig:1}. With respect to the Euclidean norm, which is an essentially strictly convex norm, the set of Chebyshev centers is a singleton shown as a black dot. However, with respect to the infinity norm, both points shown in red and blue are Chebyshev centers because they are both centers of a smallest square that contains the triangle. Hence, the set of Chebyshev centers is not a singleton, but the line segment shown in dark gray. This line segment is the center of all squares that contain the triangle and have the same radius as the blue and red ones. The diameter of this set is the length of the triangle's side for both Euclidean and infinity norms.

\subsection{Sets induced by quadratic matrix inequalities}

We define
\begin{equation}
\pmb{\Pi}_{q,p}\!\coloneqq\! \left\{\!\begin{bmatrix}
\Pi_{11} & \Pi_{12} \\
\Pi_{21} & \Pi_{22}
\end{bmatrix}\!\in\!\mathbb{S}^{q+p}\!:\! \Pi_{11}\!\in\!\mathbb{S}^q,\Pi_{22}\!\in\!\mathbb{S}^p,\Pi_{22}\leq 0,\Pi|\Pi_{22}\geq 0,
\ker\Pi_{22}\!\subseteq\!\ker\Pi_{12}\!\right\}\!,
\end{equation}
where $\Pi|\Pi_{22}\coloneqq \Pi_{11}-\Pi_{12}\Pi_{22}^\dagger \Pi_{21}$ is the generalized Schur complement of $\Pi$ with respect to $\Pi_{22}$. 

For a given $\Pi\in\pmb{\Pi}_{q,p}$, the matrices $\Pi_{11}\in\mathbb{S}^q$, $\Pi_{12}\in\mathbb{R}^{q\times p}$, $\Pi_{21}=\Pi_{12}^\top$, and $\Pi_{22}\in\mathbb{S}^p$ are defined by
\begin{equation}
\begin{bmatrix}
\Pi_{11} & \Pi_{12} \\
\Pi_{21} & \Pi_{22}
\end{bmatrix}=\Pi.
\end{equation}
We also define
\begin{equation}
\pmb{\Pi}_{q,p}^-\coloneqq \{\Pi\in\pmb{\Pi}_{q,p}:\Pi_{22}<0\}.
\end{equation}

We define the QMI-induced set $\mathcal{Z}_p(\Pi)$ associated with a matrix $\Pi\in\pmb{\Pi}_{q,p}$ to be
\begin{equation}
\label{eq:Zp}
\mathcal{Z}_p(\Pi)\coloneqq \left\{Z\in\mathbb{R}^{p\times q}:\begin{bmatrix}
I \\ Z
\end{bmatrix}^\top \Pi \begin{bmatrix}
I \\ Z
\end{bmatrix}\geq 0\right\}.
\end{equation}

The basic properties of $\mathcal{Z}_p(\Pi)$ are recalled
in the following proposition. 

\begin{proposition}[{\cite[Thms. 3.2 and 3.3]{van2023quadratic}}]
\label{prop:qmi}
Let $\Pi\in\pmb{\Pi}_{q,p}$. Then:
\begin{enumerate}[label=\normalfont{(\alph*)},ref=\ref{prop:qmi}(\alph*)]
    \item\label{prop:qmi-a} $\mathcal{Z}_p(\Pi)$ is nonempty and convex.
    \item\label{prop:qmi-b} $\mathcal{Z}_p(\Pi)$ is bounded if and only if $\Pi\in\pmb{\Pi}_{q,p}^-$.
    \item\label{prop:qmi-c} Suppose that $\Pi\in\pmb{\Pi}_{q,p}^-$. Then, $Z\in\mathcal{Z}_p(\Pi)$ if and only if
    \begin{equation}
    \label{eq:prop:char-1}
    Z=-\Pi_{22}^{-1} \Pi_{21} + (-\Pi_{22})^{-\frac{1}{2}} S (\Pi|\Pi_{22})^{\frac{1}{2}}
    \end{equation}
    for some $S\in\mathbb{R}^{p\times q}$ satisfying $SS^\top\leq I$. 
\end{enumerate}
\end{proposition}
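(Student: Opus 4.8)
The plan is to prove part (c) first, since the explicit parametrization it produces makes parts (a) and (b) almost immediate whenever $\Pi\in\pmb{\Pi}_{q,p}^-$, and then to settle the remaining rank-deficient cases of (a) and (b) directly from the definition of $\pmb{\Pi}_{q,p}$, using the kernel inclusion $\ker\Pi_{22}\subseteq\ker\Pi_{12}$. For part (c), assume $\Pi\in\pmb{\Pi}_{q,p}^-$, so that $\Pi_{22}<0$ is invertible. I would expand $\begin{bmatrix} I \\ Z\end{bmatrix}^\top\Pi\begin{bmatrix} I \\ Z\end{bmatrix}=\Pi_{11}+\Pi_{12}Z+Z^\top\Pi_{21}+Z^\top\Pi_{22}Z$ and complete the square in $Z$ to obtain
\begin{equation}
\begin{bmatrix} I \\ Z\end{bmatrix}^\top\Pi\begin{bmatrix} I \\ Z\end{bmatrix}=(\Pi|\Pi_{22})-W^\top W,\qquad W:=(-\Pi_{22})^{\frac12}\bigl(Z+\Pi_{22}^{-1}\Pi_{21}\bigr),
\end{equation}
so that $Z\in\mathcal{Z}_p(\Pi)$ if and only if $\Pi|\Pi_{22}\geq W^\top W$. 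The remaining ingredient, which I would isolate as a lemma, is the factorization fact: for $A\geq 0$ and a conformable matrix $W$, one has $A\geq W^\top W$ if and only if $W=SA^{\frac12}$ for some $S$ with $SS^\top\leq I$. The ``if'' direction is a one-line congruence estimate, $W^\top W=A^{\frac12}S^\top S\,A^{\frac12}\leq A$, using that $SS^\top\leq I$ and $S^\top S\leq I$ both express that the largest singular value of $S$ is at most one. For the ``only if'' direction, $A\geq W^\top W$ forces $\ker A\subseteq\ker W$, hence $W=WA^\dagger A$; then $S:=W(A^{\frac12})^\dagger$ satisfies $SA^{\frac12}=W$ and $S^\top S=(A^{\frac12})^\dagger W^\top W(A^{\frac12})^\dagger\leq (A^{\frac12})^\dagger A(A^{\frac12})^\dagger$, the right-hand side being an orthogonal projection and hence $\leq I$. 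Substituting $W=SA^{\frac12}$ with $A=\Pi|\Pi_{22}$ into the definition of $W$ and solving for $Z$ then yields exactly \eqref{eq:prop:char-1}. I expect this factorization lemma to be the main obstacle: one must carry the pseudo-inverse cleanly through a possibly rank-deficient $\Pi|\Pi_{22}$; the case $\Pi|\Pi_{22}>0$ is easy, reducing to $I\geq(WA^{-\frac12})^\top(WA^{-\frac12})$, i.e., $S=WA^{-\frac12}$ is a contraction.

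For part (a), nonemptiness follows by taking $S=0$ in \eqref{eq:prop:char-1} when $\Pi\in\pmb{\Pi}_{q,p}^-$; for a general $\Pi\in\pmb{\Pi}_{q,p}$ I would instead verify directly that $Z_0:=-\Pi_{22}^\dagger\Pi_{21}\in\mathcal{Z}_p(\Pi)$, using the kernel inclusion to get $\Pi_{12}\Pi_{22}^\dagger\Pi_{22}=\Pi_{12}$, after which a short computation collapses $\begin{bmatrix} I \\ Z_0\end{bmatrix}^\top\Pi\begin{bmatrix} I \\ Z_0\end{bmatrix}$ to $\Pi_{11}-\Pi_{12}\Pi_{22}^\dagger\Pi_{21}=\Pi|\Pi_{22}\geq 0$. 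For convexity, I would note that $\Phi(Z):=\begin{bmatrix} I \\ Z\end{bmatrix}^\top\Pi\begin{bmatrix} I \\ Z\end{bmatrix}$ is concave in the L\"owner order, since for $\alpha\in[0,1]$ the difference $\alpha\Phi(Z_1)+(1-\alpha)\Phi(Z_2)-\Phi(\alpha Z_1+(1-\alpha)Z_2)$ equals $\alpha(1-\alpha)(Z_1-Z_2)^\top\Pi_{22}(Z_1-Z_2)\leq 0$ because $\Pi_{22}\leq 0$; and the superlevel set $\{Z:\Phi(Z)\geq 0\}=\mathcal{Z}_p(\Pi)$ of a L\"owner-concave map is convex.

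For part (b), if $\Pi\in\pmb{\Pi}_{q,p}^-$ then by \eqref{eq:prop:char-1} the set $\mathcal{Z}_p(\Pi)$ is the image of the compact set $\{S\in\mathbb{R}^{p\times q}:SS^\top\leq I\}$ under the affine map $S\mapsto-\Pi_{22}^{-1}\Pi_{21}+(-\Pi_{22})^{-\frac12}S(\Pi|\Pi_{22})^{\frac12}$, hence bounded. Conversely, I would prove the contrapositive: if $\Pi_{22}$ is singular, pick $0\neq v\in\ker\Pi_{22}$ (so $v\in\ker\Pi_{12}$ as well, by the kernel inclusion) and any $0\neq w\in\mathbb{R}^q$; then for any $Z_0\in\mathcal{Z}_p(\Pi)$ and any $t\in\mathbb{R}$, substituting $Z_0+tvw^\top$ into $\Phi$ leaves it unchanged, because every new term carries a factor $\Pi_{22}v$ or $\Pi_{12}v$, both zero. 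Hence $\{Z_0+tvw^\top:t\in\mathbb{R}\}$ is an unbounded line contained in $\mathcal{Z}_p(\Pi)$, so $\mathcal{Z}_p(\Pi)$ is unbounded, which completes the equivalence.
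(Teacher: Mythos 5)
Your proof is correct. Note that the paper itself does not prove this proposition: it is imported verbatim from the cited reference (Theorems 3.2 and 3.3 of van Waarde et al.), so there is no in-paper argument to compare against. Your route is the standard and essentially canonical one: completing the square to reduce the QMI to $\Pi|\Pi_{22}\geq W^\top W$ with $W=(-\Pi_{22})^{1/2}(Z+\Pi_{22}^{-1}\Pi_{21})$, and then the contraction-factorization lemma ($A\geq W^\top W$ iff $W=SA^{1/2}$ with $SS^\top\leq I$), whose ``only if'' direction you handle correctly in the rank-deficient case via $\ker A\subseteq\ker W$ and $S=W(A^{1/2})^\dagger$, with $S^\top S\leq (A^{1/2})^\dagger A(A^{1/2})^\dagger=A^\dagger A\leq I$. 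The Löwner-concavity identity $\alpha\Phi(Z_1)+(1-\alpha)\Phi(Z_2)-\Phi(\alpha Z_1+(1-\alpha)Z_2)=\alpha(1-\alpha)(Z_1-Z_2)^\top\Pi_{22}(Z_1-Z_2)\leq 0$ gives convexity, the point $Z_0=-\Pi_{22}^\dagger\Pi_{21}$ gives nonemptiness for general $\Pi\in\pmb{\Pi}_{q,p}$, and the unbounded line $Z_0+tvw^\top$ with $v\in\ker\Pi_{22}\subseteq\ker\Pi_{12}$ correctly settles the converse of (b). No gaps.
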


As an example, consider the three-dimensional ellipsoid $\tfrac{z_1^2}{a^2}+\tfrac{z_2^2}{b^2}+\tfrac{z_3^2}{c^2}=1$, depicted in Fig. \ref{fig:2}. The region enclosed by this ellipsoid, $\tfrac{z_1^2}{a^2}+\tfrac{z_2^2}{b^2}+\tfrac{z_3^2}{c^2}\leq1$, is a QMI-induced set with $Z^\top=\begin{bmatrix}
z_1 & z_2 & z_3
\end{bmatrix}$ and $\Pi$ being a diagonal matrix with diagonal entries of $1$, $-\tfrac{1}{a^2}$, $-\tfrac{1}{b^2}$, and $-\tfrac{1}{c^2}$. In this example, \eqref{eq:prop:char-1} is equivalent to the parametrization of the ellipsoid in the spherical coordinates. 

\begin{figure}[h]
    \centering
    \begin{tikzpicture}
    \draw[rotate=-10,fill=gray!10] (0,0) ellipse (3cm and 1cm);
    \draw[dashed,rotate=-10] (3cm,0)  arc (0:180:3cm and 0.5cm);
    \draw[rotate=-10] (-3cm,0)  arc (180:360:3cm and 0.5cm);
    \draw[rotate=-10] (0,1cm) arc (90:270:0.5cm and 1cm);
    \draw[dashed,rotate=-10] (0,-1cm) arc (-90:90:0.5cm and 1cm);
    \draw[rotate=-10,densely dotted] (0,0)--(3cm,0);
    \draw[rotate=-10,->] (0,0)--(0.25cm,0);
    \draw[rotate=-10,densely dotted] (0,0)--(0,1cm);
    \draw[rotate=-10,->] (0,0)--(0,0.25cm);
    \draw[densely dotted] (0,0)--(-0.5cm,-0.4cm);
    \draw[->] (0,0)--(-0.1666cm,-0.1333cm);
    \draw[fill=black,draw=none] (0,0) circle (0.5mm);
    \node[align=center] at (0.2,-0.2) {\footnotesize $z_3$};
    \node[align=center] at (-0.3,0) {\footnotesize $z_2$};
    \node[align=center] at (0.25,0.2) {\footnotesize $z_1$};
    \end{tikzpicture}
    \caption{The closed shaded area defined by an ellipsoid is an example of a QMI-induced set.}
    \label{fig:2}
\end{figure}
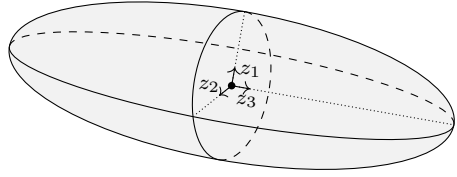

\subsection{Unitarily invariant norms}

In this paper, we focus on unitarily invariant matrix norms.

\begin{definition}[{\cite[p. 835]{bernstein2018scalar}}]
A matrix norm $\norm{\ \cdot\ }$ on $\mathbb{C}^{n\times m}$ is called \emph{unitarily invariant} if $\norm{UMV}=\norm{M}$ for all $M\in\mathbb{C}^{n\times m}$ and all unitary $U\in\mathbb{C}^{n\times n}$ and $V\in\mathbb{C}^{m\times m}$.
\end{definition}

For a matrix $M\in\mathbb{C}^{n\times m}$, its
\begin{enumerate}[label=-]
    \item Frobenius norm $\sqrt{\tr(MM^*)}=\sqrt{\sum_{i=1}^{\min\{n,m\}}\sigma_i^2(M)}$,
    \item Schatten $p$-norms $(\sum_{i=1}^{\min\{n,m\}}\sigma_i^p(M))^{1/p}$, $p\geq1$, and
    \item Ky Fan $k$-norms $\sum_{i=1}^k\sigma_i(M)$, $k\in[1,\min\{n,m\}]$,
\end{enumerate}
are all unitarily invariant \cite[p. 92]{bhatia2013matrix}. This class of norms has an interesting association with so-called symmetric gauge functions. 

\begin{definition}[{\cite[p. 51]{mirsky1960symmetric}}]
A function $g:\mathbb{R}^n\rightarrow \mathbb{R}_+$ is called a \emph{symmetric gauge function} if it satisfies the following properties\footnote{The concept of a gauge function, property (a), is due to Minkowski (see \cite[1.1(d)]{phelps1993convex}). The notion of symmetry, properties (b) and (c), was introduced by von Neumann in \cite{von1937some}.}:
\begin{enumerate}[label=\normalfont{(\alph*)}]
    \item $g$ is a norm, 
    \item $g(Px)=g(x)$ for all permutation matrices $P\in\mathbb{R}^{n\times n}$,
    \item If $s_i=\pm 1$, then $g(\begin{bmatrix}
    x_1 & \cdots & x_n
    \end{bmatrix}^\top)=g(\begin{bmatrix}
    s_1x_1 & \cdots & s_nx_n
    \end{bmatrix}^\top)$.
\end{enumerate}
\end{definition}

Having the above definition, it is not surprising that a symmetric gauge function of the vector of singular values of a matrix defines a unitarily invariant norm. Interestingly, von Neumann proved in \cite{von1937some} that the converse is also true.

\begin{proposition}[{\cite[Fact 11.9.59]{bernstein2018scalar}}] A norm $\norm{\ \cdot\ }$ on $\mathbb{C}^{n\times m}$ is unitarily invariant if and only if there exists a symmetric gauge function $g:\mathbb{R}^{\min\{n,m\}}\rightarrow \mathbb{R}_+$ such that $\norm{M}=g(\sigma(M))$ for all $M\in\mathbb{C}^{n\times m}$. 
\end{proposition}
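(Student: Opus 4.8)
The plan is to prove the two implications separately, with essentially all of the analytic content concentrated in the ``if'' direction. For the \emph{only if} direction, suppose $\norm{\ \cdot\ }$ is unitarily invariant; assume without loss of generality that $n\le m$ (if $n>m$ one argues identically with zero rows in place of zero columns below). Define $\Delta:\mathbb{R}^n\to\mathbb{C}^{n\times m}$ by $\Delta(x)=\begin{bmatrix}\operatorname{diag}(x) & 0\end{bmatrix}$ and set $g(x)\coloneqq\norm{\Delta(x)}$. Since $\Delta$ is linear and injective, the norm axioms for $\norm{\ \cdot\ }$ transfer directly to $g$, so $g$ is a norm on $\mathbb{R}^n$. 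For a permutation matrix $P$ we have $\operatorname{diag}(Px)=P\operatorname{diag}(x)P^\top$, hence $\Delta(Px)=P\,\Delta(x)\,\begin{bmatrix}P^\top & 0\\ 0 & I_{m-n}\end{bmatrix}$ is a product of $\Delta(x)$ with unitary matrices, giving $g(Px)=g(x)$; and for $s\in\{\pm1\}^n$ with $S=\operatorname{diag}(s)$ (real orthogonal), $\Delta(Sx)=S\,\Delta(x)$, so $g(Sx)=g(x)$. Thus $g$ is a symmetric gauge function. Finally, writing a singular value decomposition $M=U\Sigma V^*$ with $U,V$ unitary and $\Sigma=\Delta(\sigma(M))$, unitary invariance yields $\norm{M}=\norm{\Sigma}=g(\sigma(M))$.

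For the \emph{if} direction, let $g$ be a symmetric gauge function and define $\norm{M}\coloneqq g(\sigma(M))$. Unitary invariance is immediate from $\sigma(UMV)=\sigma(M)$; positivity follows from $\sigma(M)=0\iff M=0$ together with $g$ being a norm; and absolute homogeneity follows from $\sigma(\alpha M)=|\alpha|\,\sigma(M)$. The only nontrivial point is the triangle inequality $g(\sigma(M+N))\le g(\sigma(M))+g(\sigma(N))$. I would derive it from two classical facts: (i) the Ky Fan singular-value inequality $\sigma(M+N)\prec_w\sigma(M)+\sigma(N)$, which one gets from the variational identity $\sum_{i=1}^k\sigma_i(A)=\max\{\operatorname{Re}\tr(V^*AU):U\in\mathbb{C}^{m\times k},\ V\in\mathbb{C}^{n\times k},\ U^*U=V^*V=I_k\}$ by noting that a pair $(U,V)$ optimal for $M+N$ is feasible, though generally not optimal, for $M$ and for $N$; and (ii) monotonicity of symmetric gauge functions under weak majorization on the nonnegative orthant, i.e.\ $x,y\ge0$ and $x\prec_w y$ imply $g(x)\le g(y)$ (see \cite{marshall2011inequalities,bhatia2013matrix}). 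Since $\sigma(M)+\sigma(N)$ is already nonincreasing, (i) and (ii) combine with the triangle inequality for the \emph{vector} norm $g$ to give $g(\sigma(M+N))\le g(\sigma(M)+\sigma(N))\le g(\sigma(M))+g(\sigma(N))$.

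The main obstacle is fact (ii), which I would prove in two steps. First, componentwise monotonicity on $\mathbb{R}_+^n$: if $0\le a_i\le b_i$ for all $i$ then $g(a)\le g(b)$. One changes a single coordinate at a time; at the step that lowers coordinate $k$ from $b_k$ to $a_k$, the new vector equals $\lambda b'+(1-\lambda)S_kb'$, where $b'$ is the current (nonnegative) vector, $S_k$ negates coordinate $k$, and $\lambda\in[0,1]$ is chosen so that coordinate $k$ matches $a_k$ --- possible because $a_k\in[-b_k,b_k]$; convexity of $g$ and the sign-invariance $g(S_kb')=g(b')$ then give the claim. Second, one uses the polytope description of weak majorization: for nonincreasing $x,y\ge0$, $x\prec_w y$ implies that $x$ lies in the convex hull of the vectors $PDy$ with $P$ a permutation matrix and $D$ a diagonal matrix with entries in $\{0,1\}$. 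Writing $x=\sum_j\lambda_j P_jD_jy$ and invoking convexity of $g$, permutation-invariance, and the componentwise monotonicity just established (note $0\le D_jy\le y$), one obtains $g(x)\le\sum_j\lambda_j g(D_jy)\le\sum_j\lambda_j g(y)=g(y)$. Everything else in the proof is routine checking of norm axioms and bookkeeping with the singular value decomposition.
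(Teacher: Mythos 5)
Your proof is correct. The paper does not prove this statement at all --- it is quoted from \cite[Fact 11.9.59]{bernstein2018scalar} with the remark that the nontrivial direction is due to von Neumann --- so there is no in-paper argument to compare against; what you have written is the classical von Neumann proof. The ``only if'' direction (reading off $g$ from the norm of padded diagonal matrices and checking permutation and sign invariance via multiplication by unitaries) is complete and correct. The ``if'' direction correctly isolates the two nontrivial ingredients: the Ky Fan subadditivity $\sigma(M+N)\prec_w\sigma(M)+\sigma(N)$ via the variational trace formula, and monotonicity of symmetric gauge functions under weak majorization of nonnegative vectors, which you reduce to componentwise monotonicity (a clean convexity-plus-sign-flip argument) and the representation of a weakly majorized nonnegative vector as a convex combination of partially-permuted, coordinatewise-shrunk copies of the dominating vector (equivalently, the Mirsky description of doubly substochastic matrices as the convex hull of sub-permutation matrices). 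The only points requiring a word of care --- the degenerate case $b_k=0$ in the coordinatewise step, and the restriction of the substochastic representation to nonnegative nonincreasing vectors --- are either trivial or explicitly accounted for, and singular value vectors satisfy the needed hypotheses. This is exactly the level of detail one would expect if the paper had chosen to include a proof rather than a citation.
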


We note that every unitarily invariant norm is associated with a \emph{unique} symmetric gauge function. The uniqueness is an immediate consequence of the symmetry. As an example, the symmetric gauge functions associated with the spectral norm and the Frobenius norm, respectively, are the infinity and the Euclidean vector norms. For a Schatten $p$-norm, $p\geq 1$, its associated symmetric gauge function is the vector $p$-norm. 

We know from Proposition \ref{prop:chebuniq-c} that for essentially strictly convex norms, the Chebyshev center of a compact set of matrices is unique. The following result shows that the essential strict convexity of a unitarily invariant norm is equivalent to that of its symmetric gauge function. 
\begin{proposition}[{\cite[Thm. 3.1]{ziketak1988characterization}}]
\label{prop:uin_strict}
A unitarily invariant norm is essentially strictly convex if and only if its associated symmetric gauge function is essentially strictly convex. 
\end{proposition}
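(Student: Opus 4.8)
The statement is an equivalence, so I would establish the two implications separately; the first is routine, the second carries the real work. \emph{The gauge is essentially strictly convex if the norm is.} For $x\in\mathbb{R}^{\min\{p,q\}}$ let $D_x\in\mathbb{R}^{p\times q}$ denote the matrix carrying the entries of $x$ on its main diagonal; its singular values are the absolute values $|x_1|,\dots,|x_{\min\{p,q\}}|$ arranged in nonincreasing order, so $\norm{D_x}=g(\sigma(D_x))=g(x)$ by the symmetry of $g$, and $x\mapsto D_x$ is linear. Hence $g(x_1+x_2)=g(x_1)+g(x_2)$ implies $\norm{D_{x_1}+D_{x_2}}=\norm{D_{x_1}}+\norm{D_{x_2}}$, and essential strict convexity of $\norm{\ \cdot\ }$ gives $D_{x_1}=\alpha D_{x_2}$, i.e.\ $x_1=\alpha x_2$.

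\emph{The norm is essentially strictly convex if the gauge is.} Assume $g$ is essentially strictly convex and $\norm{A+B}=\norm{A}+\norm{B}$; I may take $A,B\neq0$. The first step is to pass the equality to the singular values. Using the classical singular-value inequality $\sigma(A+B)\prec_w\sigma(A)+\sigma(B)$, the monotonicity of symmetric gauge functions under weak majorization, and the triangle inequality for $g$, one obtains
\begin{equation*}
g(\sigma(A+B))\;\leq\;g\big(\sigma(A)+\sigma(B)\big)\;\leq\;g(\sigma(A))+g(\sigma(B))\;=\;g(\sigma(A+B)),
\end{equation*}
so both inequalities are equalities. The equality $g(\sigma(A)+\sigma(B))=g(\sigma(A))+g(\sigma(B))$, combined with essential strict convexity of $g$, forces $\sigma(A)=\alpha\,\sigma(B)$ for some $\alpha>0$. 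For the equality $g(\sigma(A+B))=g(\sigma(A)+\sigma(B))$ I would invoke the sharpening of monotonicity that says: if $g$ is essentially strictly convex, $u,v\in\mathbb{R}^{\min\{p,q\}}_{\downarrow}$, $u\prec_w v$ and $g(u)=g(v)$, then $u=v$. I would prove this by realizing the weak majorization as a finite chain of $T$-transforms (Robin--Hood transfers) and single-coordinate decreases, none of which increases $g$ and a nontrivial one of which decreases it strictly --- for a transfer by essential strict convexity applied to $\lambda u+(1-\lambda)Qu$ with $Q$ the pertinent transposition, for a decrease by strict monotonicity of $g$ on $\mathbb{R}_+^{\min\{p,q\}}$, which is itself a consequence of essential strict convexity. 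Applied with $u=\sigma(A+B)$ and $v=\sigma(A)+\sigma(B)=(1+\alpha)\sigma(B)$, this yields $\sigma(A+B)=\sigma(A)+\sigma(B)$.

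The hard part will be to upgrade the identities $\sigma(A)=\alpha\,\sigma(B)$ and $\sigma(A+B)=\sigma(A)+\sigma(B)$ to $A=\alpha B$, since a matrix is far from determined by its singular values. I would proceed by a peeling argument. The identities give $\sigma_1(A+B)=\sigma_1(A)+\sigma_1(B)$, i.e.\ equality in the spectral-norm triangle inequality, so some unit vector $w_1$ satisfies $\lVert(A+B)w_1\rVert_2=\sigma_1(A)+\sigma_1(B)$; this forces $\lVert Aw_1\rVert_2=\sigma_1(A)$, $\lVert Bw_1\rVert_2=\sigma_1(B)$ and equality in the triangle inequality of the strictly convex space $\mathbb{R}^p$, so $Aw_1$ and $Bw_1$ are parallel, with ratio $\alpha$ by comparison of lengths; hence $Aw_1=\alpha Bw_1$. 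Moreover $w_1$ is a leading right singular vector of each of $A$, $B$, $A+B$, with a common leading left singular vector $u_1$, and each of these matrices sends $w_1^{\perp}$ into $u_1^{\perp}$ (having $(u_1,w_1)$ as a singular pair); restricting $A$, $B$, $A+B$ to $w_1^{\perp}\to u_1^{\perp}$ reproduces the same configuration with every singular-value vector shortened by its first entry, so the argument applies again. Iterating until the nonzero singular values are exhausted produces an orthonormal system $w_1,w_2,\dots$ on which $Aw_k=\alpha Bw_k$, after which $A$ and $B$ both vanish on the remaining orthogonal complement; therefore $A=\alpha B$.

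I expect the two delicate points to be: the strict-monotonicity lemma for weakly majorized vectors, which is precisely where essential strict convexity of $g$ enters beyond the trivial proportionality statement; and the peeling step, where repeated singular values and the bookkeeping of the successive restrictions must be handled with care. A conceptually different route would go through duality --- essential strict convexity of $\norm{\ \cdot\ }$ is equivalent to smoothness (existence of a unique supporting functional at each boundary point of the unit ball) of the dual norm, whose symmetric gauge function is the dual gauge of $g$, which one could then try to match to a known characterization of smoothness of unitarily invariant norms --- but this merely relocates the difficulty into the differentiability theory of spectral functions, so I would still favor the direct argument sketched above.
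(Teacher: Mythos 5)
The paper does not actually prove this proposition; it is imported verbatim from Zi\c{e}tak's Theorem~3.1 in \cite{ziketak1988characterization}, so there is no in-paper argument to compare against. Your proof is, as far as I can check, correct and self-contained modulo standard facts. The easy direction via the diagonal embedding $x\mapsto D_x$ is exactly right. In the hard direction, the chain of equalities correctly isolates the two facts $\sigma(A)=\alpha\,\sigma(B)$ and $\sigma(A+B)=\sigma(A)+\sigma(B)$, and you are right that \emph{both} are needed: $A=\operatorname{diag}(2,1)$, $B=\operatorname{diag}(1,1)$ satisfies the second identity but not the conclusion. The two delicate points you flag do go through. For the strict-monotonicity lemma, the standard decomposition of weak majorization for nonnegative vectors ($u\prec_w v$ implies $u\le w\prec v$ for some $w$, with $w$ reached from $v$ by finitely many $T$-transforms) reduces everything to your two observations that a nontrivial transfer and a nontrivial coordinate decrease each strictly decrease an essentially strictly convex gauge; both observations are correct consequences of the paper's definition of essential strict convexity. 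The peeling step is also sound: since $w_k$ simultaneously attains the top singular value of the restricted $A$, $B$, and $A+B$ with a common left singular vector $u_k$, all three matrices map $w_k^{\perp}$ into $u_k^{\perp}$, the restrictions inherit the hypotheses with truncated singular-value vectors, and the proportionality $\sigma_k(A)/\sigma_k(B)=\alpha$ guarantees the same scalar at every stage; repeated singular values cause no difficulty because any maximizing vector works. The only caveat worth recording is definitional: the paper's formulation of essential strict convexity literally demands $X_1=\alpha X_2$ even when $X_2=0$, which cannot be meant; your restriction to $A,B\neq 0$ is the intended reading and matches the cited sources.
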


For instance, based on Proposition \ref{prop:uin_strict}, the Frobenius norm is essentially strictly convex since the Euclidean norm is essentially strictly convex. Therefore, it follows from Proposition \ref{prop:chebuniq-c} that the Chebyshev center of a compact set with respect to the Frobenius norm is unique. However, this is not necessarily the case for the spectral norm.

\section{Main results}
\label{sec:IV}

The following theorem is the main result of this paper that provides closed-form expressions for a Chebyshev center, the Chebyshev radius, and the diameter of QMI-induced sets. 

\begin{theorem}
\label{th:1}
Let $\Pi\in\pmb{\Pi}_{q,p}^-$. Then, for any matrix norm
\begin{equation}
-\Pi_{22}^{-1} \Pi_{21}\in\cent\mathcal{Z}_{p}(\Pi)\hspace{0.5 cm} \text{and}\hspace{0.5 cm}  \diam\mathcal{Z}_{p}(\Pi)=2\rad \mathcal{Z}_{p}(\Pi).
\end{equation}
For a unitarily invariant norm with the associated symmetric gauge function $g$, we have
\begin{equation}
\rad\mathcal{Z}_{p}(\Pi)=g(\sigma_{[k]}((-\Pi_{22})^{-\frac{1}{2}})\circ \sigma_{[k]}((\Pi|\Pi_{22})^{\frac{1}{2}})),
\end{equation}
where $k=\min\{p,q\}$.
\end{theorem}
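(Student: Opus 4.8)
My plan is to remove the affine offset, exploit the resulting central symmetry for the first two claims, and reduce the radius computation to a singular-value majorization estimate for the last. \emph{(i) Translation.} I would put $C_0\coloneqq-\Pi_{22}^{-1}\Pi_{21}$, $P\coloneqq(-\Pi_{22})^{-1/2}$ and $R\coloneqq(\Pi|\Pi_{22})^{1/2}$; since $\Pi\in\pmb{\Pi}_{q,p}^-$, $P$ is symmetric positive definite and $R$ symmetric positive semi-definite. By Proposition~\ref{prop:qmi-c},
\[
\mathcal{Z}_p(\Pi)-C_0=\mathcal{S}\coloneqq\{PSR:S\in\mathbb{R}^{p\times q},\ SS^\top\leq I\}.
\]
Moreover $\mathcal{Z}_p(\Pi)$ is compact (bounded by Proposition~\ref{prop:qmi-b}, and closed as the preimage of the positive semi-definite cone under a continuous map), hence so is $\mathcal{S}$, and by Proposition~\ref{prop:chebuniq-bc} it suffices to prove $0\in\cent\mathcal{S}$, $\diam\mathcal{S}=2\rad\mathcal{S}$, and the stated radius formula for $\mathcal{S}$.

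\emph{(ii) Central symmetry.} Observe that $\mathcal{S}=-\mathcal{S}$. For an arbitrary matrix norm, any $C\in\mathbb{R}^{p\times q}$ and any $Z\in\mathcal{S}$, the triangle inequality gives $2\norm{Z}=\norm{(C-Z)-(C+Z)}\leq\norm{C-Z}+\norm{C+Z}\leq 2\max_{Z'\in\mathcal{S}}\norm{C-Z'}$, so $\max_{Z'\in\mathcal{S}}\norm{C-Z'}\geq\max_{Z\in\mathcal{S}}\norm{Z}$ with equality at $C=0$; hence $\rad\mathcal{S}=\max_{SS^\top\leq I}\norm{PSR}$ and $0\in\cent\mathcal{S}$. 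Likewise $\diam\mathcal{S}\geq\norm{Z-(-Z)}=2\norm{Z}$ for every $Z\in\mathcal{S}$, whence $\diam\mathcal{S}\geq 2\rad\mathcal{S}$, and Proposition~\ref{prop:chebuniq-a} supplies the reverse inequality. This settles the first two assertions for every matrix norm.

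\emph{(iii) The radius for unitarily invariant norms.} Write $\norm{M}=g(\sigma(M))$ and $k=\min\{p,q\}$. For the upper bound on $\max_{SS^\top\leq I}\norm{PSR}$ I would combine $\sigma_1(S)\leq 1$ with Horn's multiplicative singular-value inequality applied first to $P\cdot(SR)$ and then to $S\cdot R$, obtaining $\prod_{i=1}^m\sigma_i(PSR)\leq\prod_{i=1}^m\sigma_i(P)\sigma_i(R)$ for all $m\in[1,k]$; as the dominating sequence is nonincreasing, this weak log-majorization implies the weak majorization $\sigma(PSR)\prec_w\sigma_{[k]}(P)\circ\sigma_{[k]}(R)$, and monotonicity of symmetric gauge functions under weak majorization yields $\norm{PSR}\leq g(\sigma_{[k]}(P)\circ\sigma_{[k]}(R))$. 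For the matching lower bound I would diagonalize $P=U_P\Lambda_P U_P^\top$ and $R=U_R\Lambda_R U_R^\top$, with $\Lambda_P,\Lambda_R$ the nonincreasing diagonal matrices of singular values, and take $\bar S=U_P J U_R^\top$, where $J\in\mathbb{R}^{p\times q}$ is the rectangular identity ($J_{ii}=1$ for $i\in[1,k]$, all other entries zero); then $\bar S\bar S^\top\leq I$ and $P\bar S R=U_P(\Lambda_P J\Lambda_R)U_R^\top$ has singular-value vector exactly $\sigma_{[k]}(P)\circ\sigma_{[k]}(R)$, so $\norm{P\bar S R}=g(\sigma_{[k]}(P)\circ\sigma_{[k]}(R))$ by unitary invariance. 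Combining the two bounds gives $\rad\mathcal{Z}_p(\Pi)=g\bigl(\sigma_{[k]}((-\Pi_{22})^{-1/2})\circ\sigma_{[k]}((\Pi|\Pi_{22})^{1/2})\bigr)$. Parts (i) and (ii) are routine; I expect the main obstacle to be the upper bound in (iii)—chaining Horn's inequality, the passage from weak log-majorization to weak majorization (which needs slight care when some $\sigma_i(P)\sigma_i(R)$ vanish), and Ky Fan's monotonicity of symmetric gauge functions—while keeping the sizes $p$, $q$ and the length $k$ of the argument of $g$ consistent throughout.
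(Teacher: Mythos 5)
Your proposal is correct and follows essentially the same route as the paper: translate by $-\Pi_{22}^{-1}\Pi_{21}$ to reduce to the set $\{PSR:SS^\top\leq I\}$, use its symmetry to identify $0$ as a center and to get $\diam=2\rad$, and obtain the radius from a singular-value weak-majorization upper bound together with an explicit SVD-aligned choice of $S$ attaining it. The only cosmetic difference is that you derive the weak majorization $\sigma(PSR)\prec_w\sigma_{[k]}(P)\circ\sigma_{[k]}(R)$ via Horn's multiplicative inequality and weak log-majorization, whereas the paper chains the additive two-factor majorization with the fact that Hadamard multiplication by a nonincreasing nonnegative vector preserves weak majorization; both are standard and equally valid.
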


Recall from the example in Fig. \ref{fig:1} that the sets of Chebyshev centers for different norms do not necessarily intersect. However, Theorem \ref{th:1} reveals that for a QMI-induced set $\mathcal{Z}_p(\Pi)$ with $\Pi\in\pmb{\Pi}_{q,p}^-$,
\begin{equation}
\label{eq:chebcent}
\hat{Z}=-\Pi_{22}^{-1} \Pi_{21},
\end{equation}
is a \emph{common} Chebyshev center with respect to all matrix norms. In addition, recall from Proposition \ref{prop:chebuniq-c} that for essentially strictly convex norms, e.g., Frobenius norm, the set of Chebyshev centers is a singleton. This implies that \eqref{eq:chebcent} is the only common center among all (unitarily invariant) norms.

As an example, consider the ellipsoid $\tfrac{z_1^2}{a^2}+\tfrac{z_2^2}{b^2}+\tfrac{z_3^2}{c^2}\leq1$ shown in Fig. \ref{fig:2}. For this example, the Chebyshev radius presented in Theorem \ref{th:1} with respect to the maximum singular value is equal to the largest semi-axis $\max\{a,b,c\}$, that is the radius of the smallest sphere circumscribing the ellipsoid.

To prove Theorem \ref{th:1}, we need three auxiliary results. The first one deals with an upper bound on the unitarily invariant norm of a product of matrices. 

\begin{lemma}
\label{lem:maj}
Let $L\in\mathbb{R}^{l\times p}$, $S\in\mathbb{R}^{p\times q}$, $R\in\mathbb{R}^{q\times r}$, and $k\coloneqq\min\{l,r\}$. Also, let $g:\mathbb{R}^k\rightarrow \mathbb{R}_+$ be a symmetric gauge function. Then,
\begin{equation}
\label{eq:lem:maj-1}
g(\sigma_{[k]}(LSR))\leq g(\sigma_{[k]}(L)\circ \sigma_{[k]}(S)\circ \sigma_{[k]}(R)).
\end{equation}
\end{lemma}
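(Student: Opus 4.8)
The plan is to reduce the three-matrix product to a two-matrix product applied twice, using the classical singular-value majorization inequality for a product of two matrices, and then invoke monotonicity of symmetric gauge functions with respect to weak majorization. First I would recall the fundamental fact (due to Horn, see e.g. \cite{marshall2011inequalities} or \cite{bhatia2013matrix}) that for any compatible matrices $A$ and $B$ one has $\sigma_{[k]}(AB)\prec_w \sigma_{[k]}(A)\circ\sigma_{[k]}(B)$, where $k$ is the appropriate minimal dimension and the singular-value vectors are zero-padded as needed so that the Hadamard product makes sense; more precisely $\prod_{i=1}^m\sigma_i(AB)\le\prod_{i=1}^m\sigma_i(A)\sigma_i(B)$ for each $m$, and taking logarithms and passing from log-majorization to weak majorization gives the stated $\prec_w$ bound on the singular values themselves. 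Applying this first to $A=L$, $B=SR$ yields $\sigma_{[k]}(LSR)\prec_w \sigma_{[k]}(L)\circ\sigma_{[k]}(SR)$, and then applying it to $A=S$, $B=R$ yields $\sigma_{[k]}(SR)\prec_w\sigma_{[k]}(S)\circ\sigma_{[k]}(R)$.

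The second ingredient I would use is that, since all singular-value vectors lie in $\mathbb{R}^k_{\downarrow}$ (nonnegative, nonincreasing), multiplying two weak-majorization relations by a common nonnegative nonincreasing vector preserves weak majorization: if $x\prec_w y$ with $x,y\in\mathbb{R}^k_{\downarrow}$ and $z\in\mathbb{R}^k_{\downarrow}$, then $z\circ x\prec_w z\circ y$. This follows from an Abel summation / rearrangement argument: $\sum_{i=1}^m z_i x_i=\sum_{i=1}^{m-1}(z_i-z_{i+1})\sum_{j=1}^i x_j + z_m\sum_{j=1}^m x_j$, and each partial sum $\sum_{j=1}^i x_j$ is dominated by $\sum_{j=1}^i y_j$ while the weights $z_i-z_{i+1}$ and $z_m$ are nonnegative. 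Using this with $z=\sigma_{[k]}(L)$ upgrades $\sigma_{[k]}(SR)\prec_w\sigma_{[k]}(S)\circ\sigma_{[k]}(R)$ to $\sigma_{[k]}(L)\circ\sigma_{[k]}(SR)\prec_w\sigma_{[k]}(L)\circ\sigma_{[k]}(S)\circ\sigma_{[k]}(R)$, and then transitivity of $\prec_w$ gives $\sigma_{[k]}(LSR)\prec_w\sigma_{[k]}(L)\circ\sigma_{[k]}(S)\circ\sigma_{[k]}(R)$.

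Finally, I would close the argument by invoking the standard fact that every symmetric gauge function $g$ is monotone with respect to weak majorization on $\mathbb{R}^k_{\downarrow}$, i.e. $x\prec_w y$ implies $g(x)\le g(y)$ (this is Ky Fan's dominance theorem; it follows from $g$ being a norm that is invariant under permutations and sign changes, hence nondecreasing in each coordinate and Schur-convex). Applying this to the weak-majorization relation just established gives exactly \eqref{eq:lem:maj-1}. The only real subtlety, and the step I would be most careful about, is the bookkeeping with dimensions: $LSR\in\mathbb{R}^{l\times r}$ so it has $\min\{l,r\}=k$ singular values, whereas the intermediate products and factors $L$, $S$, $R$, $SR$ have differing numbers of singular values ($\min\{l,p\}$, $\min\{p,q\}$, etc.), so one must consistently truncate-or-zero-pad to length $k$ via the $x_{[k]}$ notation and check that the two-matrix majorization inequality, which is usually stated for the first $k$ singular values, transfers correctly under this padding — in particular that appending zeros never violates weak majorization and that Horn's inequality applied to a product whose inner dimension is smaller than $k$ still yields the claimed bound once both sides are padded to length $k$.
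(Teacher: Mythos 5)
Your proposal is correct and follows essentially the same route as the paper's proof: the two-matrix singular-value majorization $\sigma_{[k]}(AB)\prec_w\sigma_{[k]}(A)\circ\sigma_{[k]}(B)$ applied twice, the fact that Hadamard multiplication by a nonnegative nonincreasing vector preserves weak majorization, and Ky Fan's dominance theorem to pass to the symmetric gauge function. The only (immaterial) differences are that you associate the product as $L(SR)$ and multiply by $\sigma_{[k]}(L)$ where the paper uses $(LS)R$ and multiplies by $\sigma_{[k]}(R)$, and that you sketch the Abel-summation proof of the Hadamard step which the paper simply cites.
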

\begin{proof}
The following majorization inequality holds for the singular values of the product of two matrices $X\in\mathbb{R}^{n_x\times m}$ and $Y\in\mathbb{R}^{m\times n_y}$:
\begin{equation}
\label{eq:lem:maj-2}
\sigma_{[s]}(XY)\prec_w \sigma_{[s]}(X)\circ \sigma_{[s]}(Y),
\end{equation}
for any $s\in\mathbb{N}$. A proof of inequality \eqref{eq:lem:maj-2} can be found in \cite[p. 342]{marshall2011inequalities} for square matrices, which can be straightforwardly extended to rectangular matrices by taking care of the dimension of the singular value vectors (see \cite[p. 299]{marshall2011inequalities} for further explanations). Next, to extend inequality \eqref{eq:lem:maj-2} to the product of three matrices, we consider the fact that if two vectors $x,y\in\mathbb{R}^n_+$ satisfy $x\prec_w y$, then for any $z\in\mathbb{R}_\downarrow^n$ we have $x\circ z\prec_w y\circ z$ (see \cite[Prob. 11.5.16]{bhatia2013matrix}). Hence, as $\sigma_{[k]}(R)\in\mathbb{R}_\downarrow^k$, inequality \eqref{eq:lem:maj-2} implies $\sigma_{[k]}(LS)\circ\sigma_{[k]}(R)\prec_w \sigma_{[k]}(L)\circ \sigma_{[k]}(S)\circ\sigma_{[k]}(R)$ by choosing $X=L$ and $Y=S$. Thus, by choosing $X=LS$ and $Y=R$ in \eqref{eq:lem:maj-2}, we have $\sigma_{[k]}(LSR)\prec_w \sigma_{[k]}(LS)\circ \sigma_{[k]}(R)$. Hence, we obtain
\begin{equation}
\label{eq:lem:maj-3}
\sigma_{[k]}(LSR)\prec_w \sigma_{[k]}(L)\circ \sigma_{[k]}(S)\circ \sigma_{[k]}(R).
\end{equation}
In addition, from Ky Fan's lemma, for vectors $x,y\in\mathbb{R}^n_+$, we have $g(x)\leq g(y)$ for all symmetric gauge functions if and only if $x\prec_w y$ (see \cite[Prop. 4.B.6]{marshall2011inequalities}). Therefore, inequality \eqref{eq:lem:maj-3} implies \eqref{eq:lem:maj-1} for all symmetric gauge functions. 
\end{proof}

The second auxiliary result deals with nonexpansive matrices.

\begin{lemma}
\label{lem:x-x}
Let $\mathcal{S}\coloneqq \{S\in\mathbb{R}^{n\times m}:SS^\top\leq I\}$. Then, $\mathcal{S}-\mathcal{S}=2\mathcal{S}$.
\end{lemma}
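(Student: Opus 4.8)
The plan is to recognize $\mathcal{S}$ as the closed unit ball of the spectral norm on $\mathbb{R}^{n\times m}$. Indeed, $SS^\top\leq I$ holds if and only if every eigenvalue of $SS^\top$, i.e. every $\sigma_i^2(S)$, is at most one, which is the same as $\sigma_1(S)\leq1$, i.e. $\|S\|_2\leq1$ with $\|\cdot\|_2$ the spectral norm. Consequently $\mathcal{S}$ is convex and symmetric, $S\in\mathcal{S}\iff -S\in\mathcal{S}$, and the claimed identity $\mathcal{S}-\mathcal{S}=2\mathcal{S}$ becomes the familiar statement that $B-B=2B$ for the unit ball $B$ of any norm. (If one prefers to avoid invoking the norm, convexity of $\mathcal{S}$ follows equally well from the Schur complement equivalence $SS^\top\leq I\iff\left[\begin{smallmatrix} I & S \\ S^\top & I\end{smallmatrix}\right]\geq0$, which is a linear matrix inequality in $S$.)

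For the inclusion $2\mathcal{S}\subseteq\mathcal{S}-\mathcal{S}$, given $S\in\mathcal{S}$ note that $-S\in\mathcal{S}$ because $(-S)(-S)^\top=SS^\top\leq I$, hence $2S=S-(-S)\in\mathcal{S}-\mathcal{S}$. For the reverse inclusion $\mathcal{S}-\mathcal{S}\subseteq2\mathcal{S}$, take arbitrary $S_1,S_2\in\mathcal{S}$; it suffices to show $\tfrac{1}{2}(S_1-S_2)\in\mathcal{S}$, since then $S_1-S_2\in2\mathcal{S}$. By the triangle inequality for the spectral norm, $\|\tfrac{1}{2}(S_1-S_2)\|_2\leq\tfrac{1}{2}\|S_1\|_2+\tfrac{1}{2}\|S_2\|_2\leq1$, so all singular values of $\tfrac{1}{2}(S_1-S_2)$ are at most one and therefore $\tfrac{1}{4}(S_1-S_2)(S_1-S_2)^\top\leq I$, which is exactly $\tfrac{1}{2}(S_1-S_2)\in\mathcal{S}$. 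Equivalently, one may argue directly by convexity: $-S_2\in\mathcal{S}$ and $\tfrac{1}{2}S_1+\tfrac{1}{2}(-S_2)\in\mathcal{S}$.

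There is no real obstacle here; the only point worth stating carefully is the translation between the matrix-inequality description of $\mathcal{S}$ and its description as a spectral-norm ball (equivalently, establishing that $\mathcal{S}$ is convex and symmetric), after which both inclusions are immediate. I would present the argument in the convexity/symmetry form, as it makes transparent that the identical reasoning yields $B-B=2B$ for the unit ball of any norm, which is the only feature of $\mathcal{S}$ used downstream.
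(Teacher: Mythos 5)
Your proof is correct. The forward inclusion $2\mathcal{S}\subseteq\mathcal{S}-\mathcal{S}$ is identical to the paper's (both use $2S = S-(-S)$ with $-S\in\mathcal{S}$). For the reverse inclusion the routes differ: you first translate the constraint $SS^\top\leq I$ into the statement $\sigma_1(S)\leq 1$, i.e.\ that $\mathcal{S}$ is the closed unit ball of the spectral norm, and then invoke the triangle inequality to get $\|\tfrac12(S_1-S_2)\|_2\leq 1$; the paper instead stays entirely at the level of matrix inequalities, writing $X\in\mathcal{S}$ and $-Y\in\mathcal{S}$ as the positive semidefiniteness of the block matrices $\left[\begin{smallmatrix} I & X\\ X^\top & I\end{smallmatrix}\right]$ and $\left[\begin{smallmatrix} I & -Y\\ -Y^\top & I\end{smallmatrix}\right]$, adding them, and reading off $ZZ^\top\leq 4I$ from the resulting block inequality. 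The two arguments are morally the same fact --- $\mathcal{S}$ is a convex, origin-symmetric body, and $B-B=2B$ for any such body --- but your version buys generality and transparency (the same one-line argument works for the unit ball of any norm, and you correctly observe this is the only feature used downstream), while the paper's version is self-contained within the QMI/Schur-complement toolkit it uses throughout and avoids having to justify the equivalence between $SS^\top\leq I$ and a norm bound. Either presentation is acceptable; yours would benefit from one explicit sentence verifying that the spectral norm of a matrix with $SS^\top\leq I$ is indeed at most $1$ (eigenvalues of $SS^\top$ are the squared singular values), which you do state but only in passing.
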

\begin{proof}
Let $Z\in2\mathcal{S}$. Then, $Z=2X$ for some $X\in\mathcal{S}$. Since $-X\in\mathcal{S}$ and $Z=X-(-X)$, we see that $Z\in\mathcal{S}-\mathcal{S}$. This proves that $2\mathcal{S}\subseteq \mathcal{S}-\mathcal{S}$. For the reverse inclusion, let $Z\in\mathcal{S}-\mathcal{S}$. Then, $Z=X-Y$ where $X,Y\in\mathcal{S}$. Note that
\begin{equation}
\label{eq:lem:x-x-3}
\begin{bmatrix}
I & X \\ X^\top & I
\end{bmatrix}\geq 0,\hspace{0.25 cm}\text{and}\hspace{0.25 cm} \begin{bmatrix}
I & -Y \\ -Y^\top & I
\end{bmatrix}\geq 0.
\end{equation}
Adding the matrices in \eqref{eq:lem:x-x-3} yields
\begin{equation}
\begin{bmatrix}
2I & Z \\ Z^\top & 2I
\end{bmatrix}\geq 0,
\end{equation}
equivalently, $ZZ^\top\leq 4I$. Then, $\tfrac{1}{2}Z\in\mathcal{S}$ and thus $Z\in2\mathcal{S}$. Therefore, $\mathcal{S}-\mathcal{S}\subseteq 2\mathcal{S}$, which completes the proof. 
\end{proof}

The third auxiliary result characterizes the diameter and Chebyshev radius and center for sets obtained from nonexpansive matrices. 

\begin{lemma}
\label{lem:cheball}
Let $\mathcal{S}\coloneqq \{S\in\mathbb{R}^{p\times q}:SS^\top\leq I\}$, $\mathcal{X}\coloneqq L\mathcal{S}R$ with $L\in\mathbb{R}^{l\times p}$ and $R\in\mathbb{R}^{q\times r}$, and $k\coloneqq\min \{r,l\}$. The following statements hold: 
\begin{enumerate}[label=\normalfont{(\alph*)},ref=\ref{lem:cheball}(\alph*)]
    \item\label{lem:cheball-c} For any norm, we have $0\in\cent\mathcal{X}$ and $\diam\mathcal{X}=2\rad\mathcal{X}$.
    \item\label{lem:cheball-a} For a unitarily invariant norm with associated symmetric gauge function $g$, we have $\rad\mathcal{X}=g(\sigma_{[k]}(L)\circ \sigma_{[k]}(R))$.
\end{enumerate}
\end{lemma}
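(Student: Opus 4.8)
The plan is to route everything through the identity $\rad\mathcal{X}=\max_{S\in\mathcal{S}}\norm{LSR}$, which part (a) will supply, and then to sandwich this maximum between matching bounds using Lemma \ref{lem:maj} (for the upper bound) and an explicit SVD-based construction (for the lower bound). For part (a), I would first note that $\mathcal{S}$ is symmetric ($S\in\mathcal{S}\iff -S\in\mathcal{S}$), hence $\mathcal{X}=L\mathcal{S}R=-\mathcal{X}$. For a symmetric compact set, $0$ is a Chebyshev center: given any candidate center $C$ and any $X\in\mathcal{X}$, both $X$ and $-X$ lie in $\mathcal{X}$, so the triangle inequality gives $\max_{Y\in\mathcal{X}}\norm{C-Y}\geq\tfrac12(\norm{C-X}+\norm{C+X})\geq\norm{X}$; maximizing over $X\in\mathcal{X}$ yields $\max_{Y\in\mathcal{X}}\norm{C-Y}\geq\max_{X\in\mathcal{X}}\norm{0-X}$, so $0\in\cent\mathcal{X}$ and $\rad\mathcal{X}=\max_{X\in\mathcal{X}}\norm{X}=\max_{S\in\mathcal{S}}\norm{LSR}$.

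For the diameter identity in part (a), I would invoke Lemma \ref{lem:x-x}: since $\mathcal{X}-\mathcal{X}=L(\mathcal{S}-\mathcal{S})R=L(2\mathcal{S})R=2\mathcal{X}$, we get $\diam\mathcal{X}=\max_{\Delta\in2\mathcal{X}}\norm{\Delta}=2\max_{X\in\mathcal{X}}\norm{X}=2\rad\mathcal{X}$, using the formula for $\rad\mathcal{X}$ just derived.

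For part (b), write $\norm{\cdot}=g(\sigma(\cdot))$ on $\mathbb{R}^{l\times r}$, so by part (a) it suffices to evaluate $\max_{S\in\mathcal{S}}g(\sigma_{[k]}(LSR))$. The upper bound $\rad\mathcal{X}\leq g(\sigma_{[k]}(L)\circ\sigma_{[k]}(R))$ follows by applying Lemma \ref{lem:maj} to the triple $L,S,R$, giving $g(\sigma_{[k]}(LSR))\leq g(\sigma_{[k]}(L)\circ\sigma_{[k]}(S)\circ\sigma_{[k]}(R))$, together with the observation that $SS^\top\leq I$ forces $\sigma_{\max}(S)\leq 1$, so $\sigma_{[k]}(L)\circ\sigma_{[k]}(S)\circ\sigma_{[k]}(R)\leq\sigma_{[k]}(L)\circ\sigma_{[k]}(R)$ entrywise; monotonicity of symmetric gauge functions on $\mathbb{R}^k_+$ (equivalently, entrywise domination implies weak majorization, then Ky Fan's lemma as used in the proof of Lemma \ref{lem:maj}) then gives the claim for every $S\in\mathcal{S}$.

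For the matching lower bound I would exhibit an explicit maximizer. Taking (full, possibly rectangular) SVDs $L=U_L\Sigma_L V_L^\top$ and $R=U_R\Sigma_R V_R^\top$ with $U_L,V_L,U_R,V_R$ orthogonal of sizes $l,p,q,r$, set $S^\star\coloneqq V_L W U_R^\top$, where $W\in\mathbb{R}^{p\times q}$ has entries $1$ in positions $(i,i)$ for $i\leq\min\{p,q\}$ and $0$ elsewhere. Then $S^\star(S^\star)^\top=V_LWW^\top V_L^\top\leq I$, so $S^\star\in\mathcal{S}$, and $LS^\star R=U_L(\Sigma_L W\Sigma_R)V_R^\top$ where $\Sigma_L W\Sigma_R\in\mathbb{R}^{l\times r}$ is rectangular-diagonal with $(i,i)$-entry $\sigma_i(L)\sigma_i(R)$; since $\sigma_{[k]}(L)$ and $\sigma_{[k]}(R)$ are nonincreasing and nonnegative, so is their Hadamard product, hence it is exactly the singular-value vector of $LS^\star R$, giving $\norm{LS^\star R}=g(\sigma_{[k]}(L)\circ\sigma_{[k]}(R))$ and the reverse inequality. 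Combining the two bounds proves (b). The step I expect to require the most care is this lower bound: one must track all the dimension mismatches ($k=\min\{l,r\}$ versus $\min\{l,p\}$, $\min\{q,r\}$, $\min\{p,q,l,r\}$) and the zero-padding conventions in $\sigma_{[k]}$ to be certain that $\Sigma_L W\Sigma_R$ really is rectangular-diagonal with the stated entries and that its sorted singular values coincide with $\sigma_{[k]}(L)\circ\sigma_{[k]}(R)$; everything else is symmetry bookkeeping or a direct appeal to Lemmas \ref{lem:maj} and \ref{lem:x-x}.
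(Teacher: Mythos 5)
Your proof is correct and follows essentially the same route as the paper's: Lemma \ref{lem:x-x} for the diameter, Lemma \ref{lem:maj} plus $\sigma_1(S)\leq 1$ for the upper bound on the radius, and the same SVD-based choice $S=V_L\Sigma_S U_R^\top$ to attain it. The only cosmetic difference is in part (a), where you argue $0\in\cent\mathcal{X}$ directly from the symmetry $\mathcal{X}=-\mathcal{X}$, whereas the paper reaches the same conclusion by combining $\diam\mathcal{X}=2\rho$ with the general bound $\rad\mathcal{X}\geq\tfrac12\diam\mathcal{X}$ from Proposition \ref{prop:chebuniq-a}; both rest on the same triangle-inequality computation.
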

\begin{proof}
Let $\rho=\max\{\norm{X}:X\in\mathcal{X}\}$. It follows from Lemma \ref{lem:x-x} that $\mathcal{X}-\mathcal{X}=2\mathcal{X}$ and from the definition of the diameter \eqref{eq:diam} that $\diam\mathcal{X}=\max\{\norm{\Delta}:\Delta\in2\mathcal{X}\}=2\rho$. On the one hand, Proposition \ref{prop:chebuniq-a} implies that $\rad\mathcal{X}\geq \rho$. On the other hand, we have
\begin{equation}
\rad\mathcal{X}=\min_{C_0\in\mathbb{R}^{p\times q}} \max_{X\in\mathcal{X}} \norm{C_0-X}\leq \max_{X\in\mathcal{X}} \norm{X}=\rho.
\end{equation}
Therefore, $\rad\mathcal{X}=\tfrac{1}{2}\diam\mathcal{X}=\rho$. Hence, it follows from the definition of the set of Chebyshev centers \eqref{eq:cent} that $0\in\cent\mathcal{X}$. This proves part (a). To prove part (b), assume that the norm is unitarily invariant with associated symmetric gauge function $g$. We show that \mbox{$\rho=g(\sigma_{[k]}(L)\circ \sigma_{[k]}(R))$}. To do so, observe that
\begin{equation}
\label{eq:lem:cheball*}
\norm{LSR}\leq g(\sigma_{[k]}(L)\circ \sigma_{[k]}(S)\circ \sigma_{[k]}(R)),
\end{equation}
due to Lemma \ref{lem:maj}. Since $\sigma_1(S)\leq 1$ whenever $SS^\top\leq I$, inequality \eqref{eq:lem:cheball*} yields 
\begin{equation}
\label{eq:lem:cheball**}
\rho\leq g(\sigma_{[k]}(L)\circ \sigma_{[k]}(R)).
\end{equation}
Now, we construct an element of $\mathcal{X}$ such that its norm is equal the upper bound in \eqref{eq:lem:cheball**}. For $M\in\{L,R\}$, let $M=U_M\Sigma_M V_M^\top$ be a singular value decomposition where $U_M$ and $V_M$ are orthogonal matrices, and $\Sigma_M$ is the matrix containing zero entries apart from its main diagonal which equals $\sigma(M)$. Moreover, let $\Sigma_S$ be a $p\times q$ matrix of the form $\begin{bmatrix}
I_k & 0 \\ 0 & 0
\end{bmatrix}$. Note that $S=V_L\Sigma_S U_R^\top$ satisfies $SS^\top\leq I$. Therefore, we have $LSR\in\mathcal{X}$ and
\begin{equation}
\norm{LSR}=\norm{\Sigma_L\Sigma_S\Sigma_R}=g(\sigma_{[k]}(L)\circ \sigma_{[k]}(R)).
\end{equation}
Together with \eqref{eq:lem:cheball**} this proves that $\rho= g(\sigma_{[k]}(L)\circ \sigma_{[k]}(R))$. 
\end{proof}

It is now straightforward to prove Theorem \ref{th:1} as follows. 

\begin{proof}[Proof of Theorem \ref{th:1}]
From Proposition \ref{prop:qmi-c}, we have
\begin{equation}
\mathcal{Z}_{p}(\Pi)=X_0+\mathcal{X}
\end{equation}
where $\mathcal{X}=\{LSR:SS^\top\leq I\}$, $X_0=-\Pi_{22}^{-1} \Pi_{21}$, $L=(-\Pi_{22})^{-\frac{1}{2}}$, and $R=(\Pi|\Pi_{22})^{\frac{1}{2}}$.
Based on Proposition \ref{prop:chebuniq-bc}, we have $\diam\mathcal{Z}_p(\Pi)=\diam\mathcal{X}$, $\rad\mathcal{Z}_p(\Pi)=\rad\mathcal{X}$, and \linebreak $\cent\mathcal{Z}_p(\Pi)=X_0+\cent\mathcal{X}$. Therefore, Theorem \ref{th:1}
now follows from Lemma \ref{lem:cheball}. 
\end{proof}

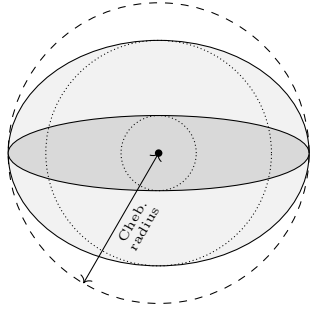
\begin{figure}[h]
    \centering
    \begin{tikzpicture}
    \draw[fill=gray!10] (0,0) ellipse (2cm and 1.5cm);
    \draw[fill=gray!30] (0,0) ellipse (2cm and 0.5cm);
    \draw[dashed] (0,0) ellipse (2cm and 2cm);
    \draw[densely dotted] (0,0) ellipse (0.5cm and 0.5cm);
    \draw[densely dotted] (0,0) ellipse (1.5cm and 1.5cm);
    \draw[rotate=-30,<->] (0,0)--(0,-2cm);
    \draw[fill=black,draw=none] (0,0) circle (0.5mm);
    \node[rotate=60,align=center] at (-0.35,-0.9) {\tiny Cheb.};
    \node[rotate=60,align=center] at (-0.2,-1) {\tiny radius};
    \end{tikzpicture}
    \caption{Two ellipses, shown by the dark and light gray areas, with the same Chebyshev radius but different volumes.}
    \label{fig:2pp}
\end{figure}

The Chebyshev centers and radii represent the best outer approximation for a set. However, two sets with the same Chebyshev radius may have different \emph{volumes}, e.g., see Fig. \ref{fig:2pp}. This motivates the study of the radius of a largest ball within the set as an inner approximation, which is another relevant concept in the analysis of QMI-induced sets. To formulate this, let the ball centered at $X_0\in\mathbb{R}^{p\times q}$ with radius $\rho\geq0$ be defined as
\begin{equation}
\mathcal{B}(X_0,\rho)\coloneqq\{X:\norm{X-X_0}\leq \rho\}.
\end{equation}
Given $\Pi\in\pmb{\Pi}_{q,p}^-$, the Chebyshev radius of $\mathcal{Z}_p(\Pi)$ is the radius of the smallest ball containing the set $\mathcal{Z}_p(\Pi)$, that is, 
\begin{equation}
\rad\mathcal{Z}_p(\Pi)=\min\{\rho:\mathcal{Z}_p(\Pi)\subseteq\mathcal{B}(\hat{Z},\rho)\},
\end{equation}
where $\hat{Z}$ is the common Chebyshev center defined in \eqref{eq:chebcent}. Now, aside from an outer ball with radius $\rho_1$ where $\mathcal{Z}_p(\Pi)\subseteq\mathcal{B}(\hat{Z},\rho_1)$, one may consider an inner ball with radius $\rho_2$ such that $\mathcal{B}(\hat{Z},\rho_2)\subseteq\mathcal{Z}_p(\Pi)$. The radius of such an inner ball is zero if the interior of the QMI-induced set in $\mathbb{R}^{p\times q}$ is empty. This is the case if and only if $\Pi|\Pi_{22}$ has a zero eigenvalue (see \cite[Thm. 3.2(c)]{van2023quadratic}). However, in that case, one may study the radius of a largest lower-dimensional ball within the set. Hence, we define
\begin{equation}
\rad_\text{in}\mathcal{Z}_p(\Pi)\coloneqq \max\{\rho:\mathcal{B}(\hat{Z},\rho)\cap\aff\mathcal{Z}_p(\Pi)\subseteq\mathcal{Z}_p(\Pi)\},
\end{equation}
which is characterized by the following theorem.

\begin{theorem}
\label{th:2}
Let $\Pi\in\pmb{\Pi}_{q,p}^-$. For a unitarily invariant norm with associated symmetric gauge function $g$, we have 
\begin{equation}
\rad_{\textup{in}}\mathcal{Z}_p(\Pi)=\sigma_*((\Pi|\Pi_{22})^{\frac{1}{2}})\sigma_p((-\Pi_{22})^{-\frac{1}{2}})g(e_1).
\end{equation}
\end{theorem}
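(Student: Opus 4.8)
The plan is to follow the template of the proof of Theorem~\ref{th:1}. By a direct translation argument (in the spirit of Proposition~\ref{prop:chebuniq-bc}, noting that $\aff$ and $\mathcal{B}$ are translation equivariant), I may assume the common center $\hat Z=-\Pi_{22}^{-1}\Pi_{21}$ is the origin, and by Proposition~\ref{prop:qmi-c} write $\mathcal{Z}_p(\Pi)=\hat Z+L\mathcal{S}R$ with $L=(-\Pi_{22})^{-\frac12}$, which is invertible since $\Pi_{22}<0$, with $R=(\Pi|\Pi_{22})^{\frac12}$, which is symmetric and positive semidefinite, and with $\mathcal{S}=\{S\in\mathbb{R}^{p\times q}:SS^\top\le I\}=\{S:\sigma_1(S)\le 1\}$. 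Since $\mathcal{S}$ is full dimensional and $L$ is invertible, $\aff\mathcal{Z}_p(\Pi)=\hat Z+\mathbb{R}^{p\times q}R$. The target quantity will then be obtained from the reformulation
\[
\rad_{\textup{in}}\mathcal{Z}_p(\Pi)=\min\bigl\{\norm{LSR}:S=SR^\dagger R,\ \sigma_1(S)=1\bigr\}.
\]

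Establishing this reformulation is the crux of the argument. The key observation is a membership criterion for points of the affine hull: for $Z\in\aff\mathcal{Z}_p(\Pi)$ one has $Z\in\mathcal{Z}_p(\Pi)$ if and only if $\sigma_1(S_0)\le 1$, where $S_0:=L^{-1}(Z-\hat Z)R^\dagger$. Indeed, $Z\in\mathcal{Z}_p(\Pi)$ means $SR=L^{-1}(Z-\hat Z)$ for some $S$ with $SS^\top\le I$; among all such $S$, the matrix $S_0$ has the smallest $SS^\top$ in the positive semidefinite order, because any other solution differs from $S_0$ by a matrix $N$ with $NR=0$, i.e.\ with rows in $\ker R$, which is orthogonal to the row space of $R$ in which the rows of $S_0$ lie, so the cross terms vanish and $SS^\top=S_0S_0^\top+NN^\top\ge S_0S_0^\top$. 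Consequently $S\mapsto\hat Z+LSR$ restricts to a bijection between $\{S:S=SR^\dagger R\}$ and $\aff\mathcal{Z}_p(\Pi)$ with inverse $Z\mapsto L^{-1}(Z-\hat Z)R^\dagger$, under which membership in $\mathcal{Z}_p(\Pi)$ reads $\sigma_1(S)\le1$. A short scaling argument, using that $\hat Z$ is a relative interior point of $\mathcal{Z}_p(\Pi)$, then identifies $\rad_{\textup{in}}\mathcal{Z}_p(\Pi)$ with the minimum of $\norm{LSR}$ over the relative boundary $\sigma_1(S)=1$, giving the displayed formula: for $\sigma_1(S_0)\le 1$ the corresponding $Z$ lies in $\mathcal{Z}_p(\Pi)$, and rescaling an optimal boundary $S$ by any factor $>1$ produces a point of $\aff\mathcal{Z}_p(\Pi)$ of slightly larger norm that falls outside $\mathcal{Z}_p(\Pi)$.

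It remains to evaluate the minimum, which is now routine. I would diagonalize, writing $L=WD_LW^\top$ and $R=VD_RV^\top$ with $W,V$ orthogonal, $D_L=\mathrm{diag}(d_1\ge\cdots\ge d_p>0)$ and $D_R=\mathrm{diag}(\lambda_1\ge\cdots\ge\lambda_r>0=\cdots=0)$ where $r=\rank R$, so that $d_p=\sigma_p(L)$ and $\lambda_r=\sigma_*(R)$. By unitary invariance $\norm{LSR}=\norm{D_L\tilde SD_R}$ with $\tilde S=W^\top SV$ and $\sigma_1(\tilde S)=1$, while the constraint $S=SR^\dagger R$ forces the last $q-r$ columns of $\tilde S$ to vanish; deleting those zero columns reduces the problem to minimizing $\norm{D_L\tilde S'\Lambda'}$ over $\tilde S'\in\mathbb{R}^{p\times r}$ with $\sigma_1(\tilde S')=1$, where $\Lambda'=\mathrm{diag}(\lambda_1,\ldots,\lambda_r)$. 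For the lower bound I would invoke Ky Fan's lemma (as in Lemma~\ref{lem:maj}) in the form $\norm{M}\ge\sigma_1(M)g(e_1)$, together with $\sigma_1(D_L\tilde S'\Lambda')\ge\sigma_{\min}(D_L)\,\sigma_{\min}(\Lambda')\,\sigma_1(\tilde S')=d_p\lambda_r$, which holds because $D_L$ and $\Lambda'$ are square and invertible. For the matching upper bound I would take $\tilde S'$ to be the $p\times r$ matrix whose only nonzero entry is a $1$ in position $(p,r)$; then $D_L\tilde S'\Lambda'$ is rank one with singular value $d_p\lambda_r$, hence of norm exactly $d_p\lambda_r g(e_1)$. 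Substituting $d_p=\sigma_p((-\Pi_{22})^{-\frac12})$ and $\lambda_r=\sigma_*((\Pi|\Pi_{22})^{\frac12})$ yields the claimed formula. The principal obstacle throughout is the membership criterion together with the description of $\aff\mathcal{Z}_p(\Pi)$; once these are in hand, the diagonalization and the Ky Fan estimate are straightforward, and the rank-one construction is immediate.
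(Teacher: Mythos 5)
Your proposal is correct and follows essentially the same route as the paper, which proves Theorem~\ref{th:2} by translating to the parametrization $\mathcal{Z}_p(\Pi)=\hat Z+L\mathcal{S}R$ and establishing Lemma~\ref{lem:ball_in}: there too the key steps are projecting onto the effective component via $S\mapsto SRR^{\dagger}$ (your canonical representative $S_0$), the Ky Fan bound $\norm{M}\geq\sigma_1(M)g(e_1)$, and a rank-one witness placed at the smallest nonzero singular directions of $L$ and $R$. Your repackaging of the inner-radius condition as the explicit minimization $\min\{\norm{LSR}:S=SR^{\dagger}R,\ \sigma_1(S)=1\}$, together with the clean membership criterion $\sigma_1(L^{-1}(Z-\hat Z)R^{\dagger})\leq 1$, is a valid and somewhat more transparent organization of the same argument (it uses the invertibility of $L$, which holds here since $\Pi_{22}<0$, whereas the paper's lemma handles general rectangular $L$ as needed for Proposition~\ref{prop:3.7}).
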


Unitarily invariant norms are called \emph{normalized} if their associated symmetric gauge function satisfies $g(e_1)=1$ \cite{hiai2002inequalities}. All Schatten $p$-norms and Ky Fan $k$-norms are normalized. However, in general, this is not the case as one may consider a weighted norm such that $g(e_1)\neq1$. 

Once more, consider the example of the ellipsoid $\tfrac{z_1^2}{a^2}+\tfrac{z_2^2}{b^2}+\tfrac{z_3^2}{c^2}\leq1$ shown in Fig. \ref{fig:2p}. For this example, the radius of the largest inner ball presented in Theorem \ref{th:2} with respect to the maximum singular value is equal to the smallest semi-axis $\min\{a,b,c\}$, which is equal to the radius of the largest sphere inscribed in the ellipsoid. Now, consider the example of an ellipse in $\mathbb{R}^3$ shown in Fig. \ref{fig:3} defined by $\tfrac{z_2^2}{b^2}+\tfrac{z_3^2}{c^2}\leq1$ and $z_1=0$. For this example, the radius of the largest sphere within the ellipse is zero. However, the radius of the inner ball presented in Theorem \ref{th:2} is the radius of the largest circle within the ellipse, which is equal to the smallest semi-axis $\min\{b,c\}$. 

\begin{figure}[h]
\centering
\begin{subfigure}[b]{0.49\textwidth}
    \centering
    \begin{tikzpicture}
    \draw[rotate=-10,fill=gray!10] (0,0) ellipse (3cm and 1cm);
    \draw[fill=gray!30] (0,0) ellipse (1cm and 1cm);
    \draw[dashed,rotate=-10] (3cm,0)  arc (0:180:3cm and 0.5cm);
    \draw[dashed,rotate=-10] (1cm,0)  arc (0:180:1cm and 0.5cm);
    \draw[rotate=-10] (-3cm,0)  arc (180:360:3cm and 0.5cm);
    \draw[rotate=-10] (-1cm,0)  arc (180:360:1cm and 0.5cm);
    \draw[rotate=-10] (0,1cm) arc (90:270:0.5cm and 1cm);
    \draw[dashed,rotate=-10] (0,-1cm) arc (-90:90:0.5cm and 1cm);
    \draw[rotate=-10,densely dotted] (0,0)--(3cm,0);
    \draw[rotate=-10,->] (0,0)--(0.25cm,0);
    \draw[rotate=-10,densely dotted] (0,0)--(0,1cm);
    \draw[rotate=-10,->] (0,0)--(0,0.25cm);
    \draw[densely dotted] (0,0)--(-0.5cm,-0.4cm);
    \draw[->] (0,0)--(-0.1666cm,-0.1333cm);
    \draw[fill=black,draw=none] (0,0) circle (0.5mm);
    \node[align=center] at (0.2,-0.2) {\footnotesize $z_3$};
    \node[align=center] at (-0.3,0) {\footnotesize $z_2$};
    \node[align=center] at (0.25,0.2) {\footnotesize $z_1$};
    \end{tikzpicture}
    \caption{The largest sphere within an ellipsoid.}
    \label{fig:2p}
    \end{subfigure}
    \hfill
    \begin{subfigure}[b]{0.49\textwidth}
    \centering
    \begin{tikzpicture}
    \draw[fill=gray!10,rotate=-10] (3cm,0)  arc (0:180:3cm and 0.5cm);
    \draw[fill=gray!10,rotate=-10] (-3cm,0)  arc (180:360:3cm and 0.5cm);
    \draw[fill=gray!30,rotate=-10] (1cm,0)  arc (0:180:1cm and 0.5cm);
    \draw[fill=gray!30,rotate=-10] (-1cm,0)  arc (180:360:1cm and 0.5cm);
    \draw[rotate=-10,densely dotted] (0,0)--(3cm,0);
    \draw[rotate=-10,->] (0,0)--(0.5cm,0);
    \draw[rotate=-10,->] (0,0)--(0,1cm);
    \draw[densely dotted] (0,0)--(-0.5cm,-0.4cm);
    \draw[->] (0,0)--(-0.3332cm,-0.2666cm);
    \draw[fill=black,draw=none] (0,0) circle (0.5mm);
    \node[align=center] at (0.2,-0.2) {\footnotesize $z_3$};
    \node[align=center] at (-0.3,0) {\footnotesize $z_2$};
    \node[align=center] at (0.4,0.7) {\footnotesize $z_1$};
    \end{tikzpicture}
    \caption{The largest circle within an ellipse in $\mathbb{R}^3$.}
    \label{fig:3}
    \end{subfigure}
    \caption{Examples of the inner balls for QMI-induced sets.}
\end{figure}
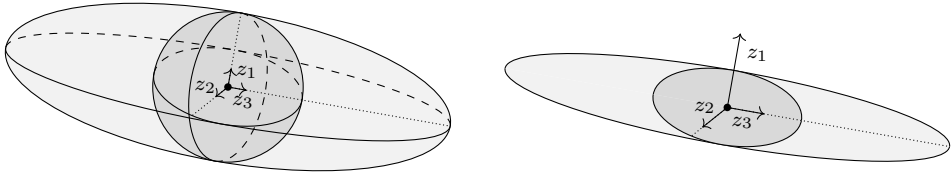

To prove Theorem \ref{th:2}, we need the following lemma. 

\begin{lemma}
\label{lem:ball_in}
Let $\rho\geq 0$, $\mathcal{S}\coloneqq \{S\in\mathbb{R}^{p\times q}:SS^\top\leq I\}$, and $\mathcal{X}\coloneqq L\mathcal{S}R$ with $L\in\mathbb{R}^{l\times p}$ and $R\in\mathbb{R}^{q\times r}$. For a unitarily invariant norm with associated symmetric gauge function $g$, we have
\begin{equation}
\label{eq:lem:ball_in}
\mathcal{B}(0,\rho)\cap\aff\mathcal{X}\subseteq\mathcal{X} 
\end{equation}
if and only if $\rho\leq \sigma_*(L)\sigma_*(R)g(e_{1})$. 
\end{lemma}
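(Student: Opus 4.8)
The plan is to reduce, by a singular value decomposition together with unitary invariance of the norm, to the case where $L$ and $R$ are rectangular diagonal; in that case the sets $\mathcal X$ and $\aff\mathcal X$ become completely explicit and the containment $\mathcal B(0,\rho)\cap\aff\mathcal X\subseteq\mathcal X$ collapses to a single scalar inequality, which I would then settle by a submultiplicativity estimate for one direction and a rank-one extremal matrix for the other.

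For the reduction, write $L=U_L\Sigma_LV_L^\top$ and $R=U_R\Sigma_RV_R^\top$ with $U_L,V_L,U_R,V_R$ orthogonal and $\Sigma_L,\Sigma_R$ rectangular diagonal with nonincreasing diagonals. The linear bijection $X\mapsto U_L^\top XV_R$ is an isometry for every unitarily invariant norm, it maps $\mathcal B(0,\rho)$ onto itself, it sends $\mathcal X=L\mathcal SR$ to $\Sigma_L\mathcal S\Sigma_R$ (because $S\mapsto V_L^\top SU_R$ preserves the constraint $SS^\top\le I$) and $\aff\mathcal X$ to the affine hull of the image; since also $\sigma_*(L)=\sigma_*(\Sigma_L)$ and $\sigma_*(R)=\sigma_*(\Sigma_R)$, I may assume $L=\Sigma_L$ and $R=\Sigma_R$. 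Set $a=\rank L$, $b=\rank R$, let $D_L\in\mathbb S^a$, $D_R\in\mathbb S^b$ be the positive-definite leading blocks of $L,R$, and for $Y\in\mathbb R^{a\times b}$ let $\widetilde Y\in\mathbb R^{l\times r}$ denote the matrix with $Y$ in its leading $a\times b$ block and zeros elsewhere (the degenerate case $L=0$ or $R=0$, where $\mathcal X=\aff\mathcal X=\{0\}$, is disposed of separately). Partitioning $S$ conformably, $LSR$ retains only the leading block, scaled to $D_LS_{11}D_R$, the remaining blocks of $S$ are free, and since every submatrix has spectral norm at most that of the whole matrix one obtains $\mathcal X=\{\widetilde{D_LTD_R}:T\in\mathbb R^{a\times b},\ \sigma_1(T)\le 1\}$ and $\aff\mathcal X=\mathrm{span}\,\mathcal X=\{\widetilde Y:Y\in\mathbb R^{a\times b}\}$. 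Consequently $\widetilde Y\in\mathcal X$ iff $\sigma_1(D_L^{-1}YD_R^{-1})\le 1$, while $\norm{\widetilde Y}=\norm{Y}$ (zero-padding only appends zeros to the singular-value vector), so $\mathcal B(0,\rho)\cap\aff\mathcal X\subseteq\mathcal X$ is equivalent to the implication $\norm{Y}\le\rho\Rightarrow\sigma_1(D_L^{-1}YD_R^{-1})\le 1$ for all $Y\in\mathbb R^{a\times b}$.

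To settle this, observe first that submultiplicativity of the spectral norm gives $\sigma_1(D_L^{-1}YD_R^{-1})\le\sigma_1(D_L^{-1})\,\sigma_1(Y)\,\sigma_1(D_R^{-1})=\sigma_1(Y)/(\sigma_*(L)\sigma_*(R))$, and since $\sigma_1(Y)e_1\prec_w\sigma(Y)$ the same Ky Fan-type argument used in the proof of Lemma~\ref{lem:maj} yields $\sigma_1(Y)\,g(e_1)=g(\sigma_1(Y)e_1)\le g(\sigma(Y))=\norm{Y}$; combining these, $\sigma_1(D_L^{-1}YD_R^{-1})\le\norm{Y}/(\sigma_*(L)\sigma_*(R)g(e_1))$, so $\rho\le\sigma_*(L)\sigma_*(R)g(e_1)$ forces the implication and proves the ``if'' direction. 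For the ``only if'' direction, let $i_0,j_0$ index the smallest diagonal entries of $D_L,D_R$ and take the rank-one matrix $Y=(\rho/g(e_1))\,e_{i_0}e_{j_0}^\top$; then $\sigma(Y)=(\rho/g(e_1))\,e_1$, hence $\norm{Y}=\rho$, whereas $\sigma_1(D_L^{-1}YD_R^{-1})=\rho/(g(e_1)\sigma_*(L)\sigma_*(R))$, which exceeds $1$ as soon as $\rho>\sigma_*(L)\sigma_*(R)g(e_1)$; thus $\widetilde Y$ lies in $(\mathcal B(0,\rho)\cap\aff\mathcal X)\setminus\mathcal X$, contradicting the containment.

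No individual step is analytically hard; the delicate part is the block bookkeeping of the reduction — verifying that the change of variables commutes simultaneously with $\mathcal B(\,\cdot\,)$, with $\aff$, and with the constraint $SS^\top\le I$; that filling the free blocks of $S$ with zeros minimizes $\sigma_1(S)$ subject to the prescribed leading block; that zero-padding preserves every unitarily invariant norm (using the $x_{[k]}$ convention); and that $g(e_1)$ is independent of the ambient dimension. I expect checking these routine but easy-to-mishandle details, rather than any inequality, to be where a careful write-up needs to concentrate its effort.
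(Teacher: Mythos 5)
Your proof is correct and follows essentially the same route as the paper's: both hinge on the identification $\aff\mathcal{X}=L\,\mathbb{R}^{p\times q}R$, the bound $\sigma_1(Y)\,g(e_1)\leq g(\sigma(Y))$ from Ky Fan's lemma together with $\sigma_1(L^\dagger)=1/\sigma_*(L)$ for the ``if'' direction, and a rank-one matrix aligned with the smallest nonzero singular values of $L$ and $R$ for the ``only if'' direction. The only difference is organizational — you perform the SVD reduction to diagonal $L,R$ up front for both directions, whereas the paper argues the ``if'' part directly with pseudo-inverses — and all the bookkeeping steps you flag (the block partition, zero-padding preserving the norm, $\aff\mathcal{X}=\operatorname{span}\mathcal{X}$) check out.
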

\begin{proof}
It is easy to see that if $L=0$ or $R=0$, then $\mathcal{X}=\{0\}$ is a singleton and the claim holds. Hence, in what follows, we assume that $L$ and $R$ are both nonzero. For the ``if'' part, assume that $\rho\leq\sigma_*(L)\sigma_*(R)g(e_1)$. Let $X\in\mathcal{B}(0,\rho)\cap\aff\mathcal{X}$. Since $\aff \mathcal{X}=\{LTR:T\in\mathbb{R}^{p\times q}\}$, we have $X=\rho LTR$ for some $T\in\mathbb{R}^{p\times q}$ satisfying $\norm{LTR}\leq 1$. Take $S=\rho L^{\dagger}LTRR^{\dagger}$, and observe that 
\begin{equation}
\sigma_1(S)=\rho\sigma_1(L^{\dagger}LTRR^{\dagger})\leq \sigma_*(L)\sigma_*(R)g(e_1) \sigma_1(L^{\dagger})\sigma_1(LTR)\sigma_1(R^{\dagger}).
\end{equation}
Since $\sigma_1(L^\dagger)=1/\sigma_*(L)$ and $\sigma_1(R^\dagger)=1/\sigma_*(R)$, we have $\sigma_1(S)\leq g(e_1) \sigma_1(LTR)$. Since $\sigma(LTR)\succ_w\sigma_1(LTR)e_1$, it follows from Ky Fan's lemma \cite[Prop. 4.B.6]{marshall2011inequalities} that
\begin{equation}
g(\sigma(LTR))\geq g(\sigma_1(LTR)e_1)=\sigma_1(LTR)g(e_1).
\end{equation}
Thus, $\sigma_1(LTR)\leq g(\sigma(LTR))/g(e_1)=\norm{LTR}/g(e_1) \leq 1/g(e_1)$. This implies that \mbox{$\sigma_1(S)\leq 1$}. Therefore, since $X=LSR$ for some $S\in\mathcal{S}$, we have $X\in\mathcal{X}$. To prove the ``only if'' part, assume that \eqref{eq:lem:ball_in} holds. This implies that for every $T\in\mathbb{R}^{p\times q}$ with $\norm{LTR}\leq 1$ we have $\rho LTR=LSR$ for some $S\in\mathcal{S}$. Based on the singular value decomposition, consider a decomposition of $L$ ($R$) as $U_L\Sigma_LV_L^\top$ ($U_R\Sigma_RV_R^\top$) such that $U_L$ and $V_L$ ($U_R$ and $V_R$) are orthogonal matrices and $\Sigma_L$ ($\Sigma_R$) has $\sigma_*(L)$ ($\sigma_*(R)$) on its $(1,1)$-th entry. Using this decomposition, we multiply $\rho LTR=LSR$ from left and right by $U_L^\top$ and $V_R$, respectively, to have the equivalent expression
\begin{equation}
\label{eq:lem:ball-in-pf}
\rho \Sigma_LV_L^\top TU_R\Sigma_R =\Sigma_LV_L^\top S U_R\Sigma_R.
\end{equation}
Take $T=V_L\Sigma_T U_R^\top$ where $\Sigma_T$ has all its entries equal to zero but the $(1,1)$-entry, which is equal to $1/(\sigma_*(L)\sigma_*(R)g(e_1))$. With this choice, the condition $\norm{LTR}=1$ is satisfied since 
\begin{equation}
\norm{LTR}=\norm{\Sigma_L \Sigma_T\Sigma_R}=g((1/g(e_1))e_1)=(1/g(e_1))g(e_1)=1.
\end{equation}
Substitute 
$T=V_L\Sigma_T U_R^\top$ in \eqref{eq:lem:ball-in-pf} to have
\begin{equation}
\label{eq:lem:ball-in-pf2}
\rho \Sigma_L\Sigma_T\Sigma_R =\Sigma_LV_L^\top S U_R\Sigma_R.
\end{equation}
The $(1,1)$-th entry of \eqref{eq:lem:ball-in-pf2} reads 
$\rho/g(e_1) =\sigma_*(L)\sigma_*(R)\tilde{s}$, where $\tilde{s}$ is the $(1,1)$-entry of $V_L^\top S U_R$. Since matrix entries are upper bounded by the largest singular value (see \cite[Fact 11.9.23(xii)]{bernstein2018scalar}), we have $\tilde{s}\leq \sigma_1(V_L^\top S U_R)=\sigma_1(S)\leq 1$. Therefore, we have \mbox{$\rho/g(e_1) \leq\sigma_*(L)\sigma_*(R)$}, which completes the proof. 
\end{proof}

It is now straightforward to prove Theorem \ref{th:2} as follows.

\begin{proof}[Proof of Theorem \ref{th:2}]
From Proposition \ref{prop:qmi-c} we have $\mathcal{Z}_{p}(\Pi)=X_0+\mathcal{X}$ where $X_0=-\Pi_{22}^{-1} \Pi_{21}$ and $\mathcal{X}=\{LSR:SS^\top\leq I\}$ with $L=(-\Pi_{22})^{-\frac{1}{2}}$ and $R=(\Pi|\Pi_{22})^{\frac{1}{2}}$. We also verify that $\mathcal{B}(0,\rho)\cap\aff\mathcal{X}\subseteq \mathcal{X}$ if and only if $\mathcal{B}(X_0,\rho)\cap\aff(\mathcal{X}+X_0)\subseteq X_0+\mathcal{X}$.  Therefore, Theorem \ref{th:2}
now follows from Lemma \ref{lem:ball_in}. 
\end{proof}

Using Lemmas \ref{lem:cheball} and \ref{lem:ball_in}, the results of Theorems \ref{th:1} and \ref{th:2} can be extended to a more general class of matrix sets, which is presented in the following proposition.

\begin{proposition}
\label{prop:3.7}
Let $\Pi\in\pmb{\Pi}_{q,p}^-$, $\Theta_{L}\in\mathbb{R}^{l\times p}$, and $\Theta_{R}\in\mathbb{R}^{q\times r}$. Then, for any norm we have
\begin{equation}
\label{eq:prop:3.7-1}
-\Theta_L\Pi_{22}^{-1} \Pi_{21}\Theta_R\in\cent(\Theta_L\mathcal{Z}_{p}(\Pi)\Theta_R),
\end{equation}
and
\begin{equation}
\label{eq:prop:3.7-2}
\diam(\Theta_{L}\mathcal{Z}_{p}(\Pi)\Theta_{R})=2\rad (\Theta_{L}\mathcal{Z}_{p}(\Pi)\Theta_{R}).
\end{equation}
For a unitarily invariant norm with associated symmetric gauge function $g$, we have
\begin{equation}
\label{eq:prop:3.7-3}
\rad (\Theta_{L}\mathcal{Z}_{p}(\Pi)\Theta_{R})=g(\sigma_{[k]}(\Theta_{L}(-\Pi_{22})^{-\frac{1}{2}})\circ \sigma_{[k]}((\Pi|\Pi_{22})^{\frac{1}{2}}\Theta_{R})),
\end{equation} 
where $k\coloneqq\min\{l,r\}$, and
\begin{equation}
\label{eq:prop:3.7-4}
\rad_\textup{in} (\Theta_{L}\mathcal{Z}_{p}(\Pi)\Theta_{R})=\sigma_{*}(\Theta_{L}(-\Pi_{22})^{-\frac{1}{2}}) \sigma_{*}((\Pi|\Pi_{22})^{\frac{1}{2}}\Theta_{R})g(e_1).
\end{equation} 
\end{proposition}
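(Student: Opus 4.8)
The plan is to reduce the whole statement to Lemmas \ref{lem:cheball} and \ref{lem:ball_in} by stripping off a translation and absorbing $\Theta_L$ and $\Theta_R$ into the two sandwiching factors. First I would invoke Proposition \ref{prop:qmi-c} to write $\mathcal{Z}_p(\Pi)=X_0+\mathcal{X}$ with $X_0=-\Pi_{22}^{-1}\Pi_{21}$, $\mathcal{X}=\{LSR:SS^\top\leq I\}$, $L=(-\Pi_{22})^{-\frac12}$, and $R=(\Pi|\Pi_{22})^{\frac12}$. Using the elementary set identities $\Theta_L(X_0+\mathcal{X})\Theta_R=\Theta_LX_0\Theta_R+\Theta_L\mathcal{X}\Theta_R$ and $\Theta_L\{LSR:SS^\top\leq I\}\Theta_R=\{(\Theta_L L)S(R\Theta_R):SS^\top\leq I\}$, this gives
\[
\Theta_L\mathcal{Z}_p(\Pi)\Theta_R=Y_0+\widetilde L\,\mathcal{S}\,\widetilde R,
\]
where $Y_0\coloneqq-\Theta_L\Pi_{22}^{-1}\Pi_{21}\Theta_R$, $\mathcal{S}\coloneqq\{S\in\mathbb{R}^{p\times q}:SS^\top\leq I\}$, $\widetilde L\coloneqq\Theta_L(-\Pi_{22})^{-\frac12}\in\mathbb{R}^{l\times p}$, and $\widetilde R\coloneqq(\Pi|\Pi_{22})^{\frac12}\Theta_R\in\mathbb{R}^{q\times r}$. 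This is precisely the form $Y_0+\widetilde L\mathcal{S}\widetilde R$ treated by the auxiliary lemmas, with the relevant index being $k=\min\{l,r\}$.

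Next I would peel off the translation by $Y_0$ using Proposition \ref{prop:chebuniq-bc}: the diameter and Chebyshev radius of $\Theta_L\mathcal{Z}_p(\Pi)\Theta_R$ coincide with those of $\widetilde L\mathcal{S}\widetilde R$, and $\cent(\Theta_L\mathcal{Z}_p(\Pi)\Theta_R)=Y_0+\cent(\widetilde L\mathcal{S}\widetilde R)$. Lemma \ref{lem:cheball-c} then gives $0\in\cent(\widetilde L\mathcal{S}\widetilde R)$ and $\diam(\widetilde L\mathcal{S}\widetilde R)=2\rad(\widetilde L\mathcal{S}\widetilde R)$, which yields \eqref{eq:prop:3.7-1} and \eqref{eq:prop:3.7-2}; and for a unitarily invariant norm Lemma \ref{lem:cheball-a} gives $\rad(\widetilde L\mathcal{S}\widetilde R)=g(\sigma_{[k]}(\widetilde L)\circ\sigma_{[k]}(\widetilde R))$, which is \eqref{eq:prop:3.7-3} after substituting the definitions of $\widetilde L$ and $\widetilde R$.

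For the inner radius \eqref{eq:prop:3.7-4}, I would first note that, by the assertion \eqref{eq:prop:3.7-1} just established, $Y_0$ is a Chebyshev center of $\Theta_L\mathcal{Z}_p(\Pi)\Theta_R$, so $\rad_\textup{in}(\Theta_L\mathcal{Z}_p(\Pi)\Theta_R)$ is the largest $\rho$ with $\mathcal{B}(Y_0,\rho)\cap\aff(\Theta_L\mathcal{Z}_p(\Pi)\Theta_R)\subseteq\Theta_L\mathcal{Z}_p(\Pi)\Theta_R$. Exactly as in the proof of Theorem \ref{th:2}, a translation argument shows that this inclusion is equivalent to $\mathcal{B}(0,\rho)\cap\aff(\widetilde L\mathcal{S}\widetilde R)\subseteq\widetilde L\mathcal{S}\widetilde R$, and Lemma \ref{lem:ball_in} (applied with $L=\widetilde L$ and $R=\widetilde R$) characterizes the latter as $\rho\leq\sigma_*(\widetilde L)\sigma_*(\widetilde R)g(e_1)$; taking the maximal such $\rho$ gives \eqref{eq:prop:3.7-4}.

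I do not expect a genuine obstacle here, since the argument is a direct transcription of the proofs of Theorems \ref{th:1} and \ref{th:2} with $\Theta_L$ and $\Theta_R$ folded into the factors. The only point needing mild care is that $\sigma_*(\widetilde L)$ and $\sigma_*(\widetilde R)$ in \eqref{eq:prop:3.7-4} are the smallest nonzero singular values of the \emph{products} $\Theta_L(-\Pi_{22})^{-1/2}$ and $(\Pi|\Pi_{22})^{1/2}\Theta_R$, not of the separate factors; this is exactly what Lemma \ref{lem:ball_in} returns once $\widetilde L$ and $\widetilde R$ are taken as those composite matrices, and the degenerate cases $\widetilde L=0$ or $\widetilde R=0$ (where both sides vanish) are already absorbed into that lemma's statement.
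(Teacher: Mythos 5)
Your proposal is correct and follows essentially the same route as the paper's proof: decompose $\mathcal{Z}_p(\Pi)$ via Proposition \ref{prop:qmi-c}, absorb $\Theta_L$ and $\Theta_R$ into the sandwiching factors so that $\Theta_L\mathcal{Z}_p(\Pi)\Theta_R$ has the form $X_0+L\mathcal{S}R$ with $L=\Theta_L(-\Pi_{22})^{-\frac12}$ and $R=(\Pi|\Pi_{22})^{\frac12}\Theta_R$, then invoke Lemmas \ref{lem:cheball} and \ref{lem:ball_in} after translating by $X_0$. Your version merely spells out the elementary set identities and the translation step that the paper leaves implicit.
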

\begin{proof}
Based on Proposition \ref{prop:qmi-c}, we have 
\begin{equation}
\Theta_{L}\mathcal{Z}_{p}(\Pi)\Theta_{R}=X_0+\mathcal{X},
\end{equation}
where $X_0=-\Theta_{L}\Pi_{22}^{-1} \Pi_{21}\Theta_{R}$ and $\mathcal{X}=L\mathcal{S}R$ with $L=\Theta_{L}(-\Pi_{22})^{-\frac{1}{2}}$ and $R=(\Pi|\Pi_{22})^{\frac{1}{2}}\Theta_{R}$. Now, \eqref{eq:prop:3.7-1}, \eqref{eq:prop:3.7-2}, and \eqref{eq:prop:3.7-3} follow from Lemma \ref{lem:cheball}. Using a similar argument as in the proof of Theorem \ref{th:2}, the proof of \eqref{eq:prop:3.7-4} follows from Lemma \ref{lem:ball_in}. 
\end{proof}

\section{Applications to data-driven modeling and control}
\label{sec:III}

In this section, we discuss the applications of the presented results in data-driven modeling and control using a set-membership approach. Consider multiple-input multiple-output (MIMO) autoregressive models with exogenous input (ARX models) given by
\begin{equation}
\label{eq:2}
y(t+L)=\sum_{i=0}^{L-1}P_iy(t+i)+\sum_{j=0}^{M}Q_ju(t+j)+w(t),
\end{equation}
where $y(t)\in\mathbb{R}^p$ is the output, $u(t)\in\mathbb{R}^m$ is the input, $w(t)\in\mathbb{R}^p$ is the process noise, the integers $L$ and $M$ are known, and matrices $P_i\in\mathbb{R}^{p\times p}$, $i=0,\ldots,L-1$, and $Q_j\in\mathbb{R}^{p\times m}$, $j=0,\ldots,M$, are unknowns. We assume that the system is causal, i.e., $M\leq L$. Note that input-state models can be captured by choosing $L=1$ and $M=0$. We refer to \eqref{eq:2} as the true system and we define
\begin{equation}
P_\textup{true}\coloneqq\begin{bmatrix}
P_0 & \cdots & P_{L-1}
\end{bmatrix}\ \ \text{and}\ \  Q_\textup{true}\coloneqq\begin{bmatrix}
Q_0 & \cdots & Q_{M}
\end{bmatrix}.
\end{equation}

We collect the input-output data 
\begin{equation}
\label{eq:i-o-data}
u(0), \ldots, u(T+M-L),\hspace{0.5 cm} y(0), \ldots, y(T),
\end{equation}
from \eqref{eq:2}, where $T\geq L$. The process noise $w(t)$ that affects the dynamics during this experiment is unknown, but the matrix
\begin{equation*}
W_-\coloneqq\begin{bmatrix}
w(0) & \cdots & w(T-L)
\end{bmatrix}
\end{equation*}
is assumed to satisfy an energy bound
\begin{equation}
\label{eq:ass1-1}
\begin{bmatrix}
I \\ W_-^\top
\end{bmatrix}^\top \begin{bmatrix}
\Phi_{11} & \Phi_{12} \\
\Phi_{12}^\top & \Phi_{22}
\end{bmatrix} \begin{bmatrix}
I \\ W_-^\top
\end{bmatrix}\geq 0,
\end{equation}
i.e., $W_-^\top\in \mathcal{Z}_{T-L+1}(\Phi)$, for a given matrix $\Phi\in\pmb{\Pi}_{p,T-L+1}^-$. The reader can refer to \mbox{\cite[p. 4]{van2023quadratic}} for the special cases that can be captured by this noise model. 

Define the matrices
\begin{equation*}
\begin{split}
U_-&\coloneqq \begin{bmatrix}
u(0) & u(1) & \cdots & u(T-L) \\ 
\vdots & \vdots & \ddots & \vdots \\
u(M) & u(M+1) & \cdots & u(T+M-L)
\end{bmatrix}, \\
Y_-&\coloneqq \begin{bmatrix}
y(0) & y(1) & \cdots & y(T-L) \\
\vdots & \vdots & \ddots & \vdots \\
y(L-1) & y(L) & \cdots & y(T-1)
\end{bmatrix},\ \text{and} \\
Y_+&\coloneqq \begin{bmatrix}
y(L) & y(L+1) & \cdots & y(T)\end{bmatrix}.
\end{split}
\end{equation*}
A pair of real matrices $(P,Q)$ satisfying
\begin{equation}
\label{eq:7}
Y_+=PY_-+QU_-+W_-
\end{equation}
for some $W_-^\top\in \mathcal{Z}_{T-L+1}(\Phi)$ is called a \emph{data-consistent system}. We define the set of all data-consistent systems as
\begin{equation}
\label{eq:8}
\Sigma\coloneqq \left\{(P,Q):(Y_+-PY_--QU_-)^\top\in \mathcal{Z}_{T-L+1}(\Phi)\right\},
\end{equation}
which is the set of all $(P,Q)$ that could have generated the available data for some noise sequence agreeing with \eqref{eq:ass1-1}. 

One can verify that the set of data-consistent systems is equivalent to a QMI-induced set (cf. \cite[Lem. 4]{van2020noisy} and \cite[Sec. IV]{van2023behavioral}), that is,
\begin{equation}
\label{eq:Sigma->QMI}
(P,Q)\in\Sigma\ \textup{ if and only if }\ \begin{bmatrix}
P & Q
\end{bmatrix}^\top\in\mathcal{Z}_{s}(N),
\end{equation}
where $s\coloneqq Lp+(M+1)m$ denotes the number of columns of $\begin{bmatrix}
    Y_-^\top & U_-^\top
    \end{bmatrix}$ and 
\begin{equation}
\label{eq:10}
N\coloneqq 
\begin{bmatrix}
I & Y_+ \\ 
0 & -Y_- \\
0 & -U_-
\end{bmatrix}\begin{bmatrix}
\Phi_{11} & \Phi_{12} \\
\Phi_{12}^\top & \Phi_{22}
\end{bmatrix}\begin{bmatrix}
I & Y_+ \\ 
0 & -Y_- \\
0 & -U_-
\end{bmatrix}^\top .
\end{equation}
Furthermore, it can be observed that matrix $N$ in \eqref{eq:10} satisfies $N\in\pmb{\Pi}_{p,s}$. Hence, based on Proposition \ref{prop:qmi}, we have the following facts.

\begin{proposition}
\label{prop:xu}
The following statements hold: 
\begin{enumerate}[label=\normalfont{(\alph*)},ref=\ref{prop:xu}(\alph*)]
    \item\label{prop:xu-a} $\Sigma$ is nonempty.
    \item\label{prop:xu-b} $\Sigma$ is bounded if and only if $\rank\begin{bmatrix}
    Y_-^\top & U_-^\top
    \end{bmatrix}=s$.
    \item\label{prop:xu-c} Assume that $\Sigma$ is bounded. Then, $\Sigma$ is a singleton if and only if $N|N_{22}=0$. In this case, $\Sigma=\{(P_{\textup{true}},Q_{\textup{true}})\}$ and $\begin{bmatrix} P_{\textup{true}} & Q_{\textup{true}} \end{bmatrix}=-N_{12}N_{22}^{-1}$.
\end{enumerate}
\end{proposition}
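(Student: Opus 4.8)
The plan is to transport all three statements to the QMI-induced set $\mathcal{Z}_s(N)$ through the equivalence \eqref{eq:Sigma->QMI} together with the fact that $N\in\pmb{\Pi}_{p,s}$, and then read everything off from Proposition \ref{prop:qmi}. The first observation is that $(P,Q)\mapsto\begin{bmatrix}P&Q\end{bmatrix}^\top$ is a linear bijection, so by \eqref{eq:Sigma->QMI} the sets $\Sigma$ and $\mathcal{Z}_s(N)$ are the same up to a linear change of coordinates; in particular nonemptiness, boundedness, and the singleton property all transfer between them. Part (a) is then immediate, since $\mathcal{Z}_s(N)$ is nonempty by Proposition \ref{prop:qmi-a}.

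For part (b), the key step is to extract $N_{22}$ from \eqref{eq:10}. Writing $D\coloneqq\begin{bmatrix}Y_-\\U_-\end{bmatrix}$ and performing the block multiplication, the zero blocks in the first block column of $\begin{bmatrix}I&Y_+\\0&-Y_-\\0&-U_-\end{bmatrix}$ annihilate the $\Phi_{11}$ and $\Phi_{12}$ contributions to the $(2,2)$ block, leaving $N_{22}=D\Phi_{22}D^\top$. Since $\Phi\in\pmb{\Pi}_{p,T-L+1}^-$ we have $\Phi_{22}<0$, so $x^\top N_{22}x=(D^\top x)^\top\Phi_{22}(D^\top x)\le 0$ with equality iff $D^\top x=0$; hence $N_{22}<0$ if and only if $\ker D^\top=\{0\}$, i.e. $\rank D=s$, i.e. $\rank\begin{bmatrix}Y_-^\top&U_-^\top\end{bmatrix}=s$. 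By Proposition \ref{prop:qmi-b}, $\mathcal{Z}_s(N)$, and hence $\Sigma$, is bounded iff $N\in\pmb{\Pi}_{p,s}^-$, that is iff $N_{22}<0$, which is precisely the stated rank condition.

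For part (c), assume $\Sigma$ is bounded, so $N\in\pmb{\Pi}_{p,s}^-$ by part (b) and Proposition \ref{prop:qmi-c} applies: $\begin{bmatrix}P&Q\end{bmatrix}^\top\in\mathcal{Z}_s(N)$ if and only if $\begin{bmatrix}P&Q\end{bmatrix}^\top=-N_{22}^{-1}N_{21}+(-N_{22})^{-\frac{1}{2}}S(N|N_{22})^{\frac{1}{2}}$ for some $S$ with $SS^\top\le I$. Because $(-N_{22})^{-\frac{1}{2}}$ is invertible, the variable term is identically zero exactly when $(N|N_{22})^{\frac{1}{2}}=0$, equivalently $N|N_{22}=0$ (the generalized Schur complement satisfies $N|N_{22}\ge 0$, so its square root vanishes iff it does); hence $\mathcal{Z}_s(N)$, and thus $\Sigma$, is a singleton iff $N|N_{22}=0$. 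In that case the unique element of $\mathcal{Z}_s(N)$ is $-N_{22}^{-1}N_{21}$, and transposing while using $N_{21}^\top=N_{12}$ and $N_{22}^\top=N_{22}$ yields $\begin{bmatrix}P&Q\end{bmatrix}=-N_{12}N_{22}^{-1}$ for the unique data-consistent pair. Finally, the pair $(P_{\textup{true}},Q_{\textup{true}})$ satisfies \eqref{eq:7} with the actual noise sequence $W_-$, which lies in $\mathcal{Z}_{T-L+1}(\Phi)$ by \eqref{eq:ass1-1}; hence $(P_{\textup{true}},Q_{\textup{true}})\in\Sigma$, and the singleton property forces $\Sigma=\{(P_{\textup{true}},Q_{\textup{true}})\}$ with $\begin{bmatrix}P_{\textup{true}}&Q_{\textup{true}}\end{bmatrix}=-N_{12}N_{22}^{-1}$.

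Nothing here is deep; the one place that requires care is the block-matrix bookkeeping producing $N_{22}=D\Phi_{22}D^\top$ (and correctly tracking the sign coming from the $-Y_-,-U_-$ blocks), together with the small verification that the variable term in the parametrization of $\mathcal{Z}_s(N)$ collapses to $\{0\}$ precisely when $(N|N_{22})^{\frac{1}{2}}=0$; the latter can alternatively be deduced from Theorem \ref{th:1} by noting that $\Sigma$ is a singleton iff $\rad\mathcal{Z}_s(N)=0$ and that $\sigma_{[k]}((-N_{22})^{-\frac{1}{2}})$ has only positive entries.
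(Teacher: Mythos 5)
Your proof is correct and follows exactly the route the paper intends: the paper omits a detailed argument, stating only that the facts follow from Proposition~\ref{prop:qmi} via the equivalence \eqref{eq:Sigma->QMI}, and your computation of $N_{22}=\begin{bmatrix}Y_-^\top & U_-^\top\end{bmatrix}^\top\!\Phi_{22}\begin{bmatrix}Y_-^\top & U_-^\top\end{bmatrix}$ together with the parametrization in Proposition~\ref{prop:qmi-c} supplies precisely the omitted bookkeeping. The observation that $(P_{\textup{true}},Q_{\textup{true}})\in\Sigma$ forces the singleton to be the true system is also the intended argument.
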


In the noiseless case, $\Phi_{11}=0$, $\Phi_{12}=\Phi_{21}^\top=0$, and $\Phi_{22}=-I$, the condition of Proposition \ref{prop:xu-c}, $N|N_{22}=0$, is always satisfied. In the presence of noise, a necessary condition for unique identification is that the noise sequence is such that the left-hand side of \eqref{eq:ass1-1} is equal to zero, i.e., the noise is of bound-exploring type (cf. \cite[Ass. 5]{lauricella2020set}). However, in general, $\mathcal{Z}_{s}(N)$ is not a singleton and $(P_{\textup{true}},Q_{\textup{true}})$ cannot be uniquely recovered from the data. Nevertheless, in this scenario, we can still formalize system identification as the problem of finding a Chebyshev center, the Chebyshev radius, and the diameter of $\mathcal{Z}_{s}(N)$. 

\subsection{Best worst-case estimation}
\label{subsec:UniEs}

Let $(\hat{P},\hat{Q})$ be an estimation for the true system. The worst-case error of this estimation with respect to a matrix norm $\norm{\ \cdot\ }$ is defined as
\begin{equation}
\max_{(P,Q)\in\Sigma}\norm{\begin{bmatrix}
\hat{P} & \hat{Q}
\end{bmatrix}-\begin{bmatrix}
P & Q
\end{bmatrix}}.
\end{equation}
A best worst-case estimation for the true system is the one for which the worst-case error is minimized. Based on Theorem \ref{th:1}, if the data satisfy $\rank\begin{bmatrix}
    Y_-^\top & U_-^\top
    \end{bmatrix}=s$, then such an estimation is given by the following Chebyshev center:
\begin{equation}
\label{eq:chebcent_app}
\begin{bmatrix} \hat{P} & \hat{Q} \end{bmatrix}\coloneqq-N_{12}N_{22}^{-1}
=\begin{bmatrix}
I \\ Y_+^\top
\end{bmatrix}^\top\begin{bmatrix}
\Phi_{12} \\ \Phi_{22}
\end{bmatrix}\begin{bmatrix}
Y_- \\ U_-
\end{bmatrix}^\top \left(\begin{bmatrix}
Y_- \\ U_-
\end{bmatrix}\Phi_{22}\begin{bmatrix}
Y_- \\ U_-
\end{bmatrix}^\top\right)^{-1}.
\end{equation}
We note that \eqref{eq:chebcent_app} is a data-consistent system, $(\hat{P},\hat{Q})\in\Sigma$, and the only common center for all matrix norms. Therefore, $(\hat{P},\hat{Q})$ given by \eqref{eq:chebcent_app} is the \emph{only} estimation that minimizes the worst-case error with respect to \emph{all} matrix norms. For a unitarily invariant norm associated with a symmetric gauge function $g$, the worst-case error of this estimation is
\begin{equation}
\label{eq:Chebrad_app}
\rad\mathcal{Z}_s(N)=g(\sigma_{[p]}((-N_{22})^{-\frac{1}{2}})\circ \sigma((N|N_{22})^{\frac{1}{2}})).
\end{equation}
For this estimation, assuming that the norm is normalized, the radius of the largest inner ball is
\begin{equation}
\label{eq:inner_app}
\rad_{\textup{in}}\mathcal{Z}_s(N)=\sigma_*((N|N_{22})^{\frac{1}{2}})\sigma_s((-N_{22})^{-\frac{1}{2}}).
\end{equation}

\subsection{Least-squares solution as a Chebyshev center}
\label{subsec:LS}

It turns out that the common Chebyshev center \eqref{eq:chebcent_app} coincides with the solution of a matrix least-squares problem. Here, we denote the Frobenius norm by $\norm{\ \cdot\ }_\textup{F}$.
\begin{proposition}
Suppose that $\rank\begin{bmatrix}
Y_-^\top & U_-^\top
\end{bmatrix}=s$. Then, we have
\begin{equation}
\label{eq:GLS}
(\hat{P},\hat{Q})=\arg\min_{(P,Q)} \norm{(PY_-+QU_--Y_+-\Phi_{12}\Phi_{22}^{-1}) (-\Phi_{22})^{\frac{1}{2}}}_{\textup{F}}.
\end{equation}
\end{proposition}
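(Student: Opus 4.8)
The plan is to show that the least-squares problem in \eqref{eq:GLS} is an unconstrained strictly convex quadratic program whose unique minimizer is precisely the matrix $-N_{12}N_{22}^{-1}$ appearing in \eqref{eq:chebcent_app}. First I would rewrite the objective: setting $Z := \begin{bmatrix} P & Q \end{bmatrix}$, $D := \begin{bmatrix} Y_-^\top & U_-^\top \end{bmatrix}$ (so that $PY_- + QU_- = ZD^\top$), and noting that $(-\Phi_{22})^{1/2}$ is invertible since $\Phi \in \pmb{\Pi}_{p,T-L+1}^-$, the objective becomes $\norm{(ZD^\top - Y_+)(-\Phi_{22})^{1/2} - \Phi_{12}}_{\textup{F}}$ after simplifying $-\Phi_{12}\Phi_{22}^{-1}(-\Phi_{22})^{1/2} = \Phi_{12}(-\Phi_{22})^{-1/2}$; care with signs is needed here. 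This is a linear-least-squares problem in $Z$, and since $\rank D = s$ the map $Z \mapsto ZD^\top$ is injective, so the objective is strictly convex and has a unique stationary point.

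Next I would set the gradient to zero. Writing the squared objective as $\tr\big((ZD^\top - Y_+)(-\Phi_{22})(ZD^\top - Y_+)^\top\big) + (\text{lower-order terms in } Z) $ is not quite right because of the $\Phi_{12}$ shift; instead I would expand $\norm{(ZD^\top - Y_+)(-\Phi_{22})^{1/2} - \Phi_{12}}_{\textup{F}}^2$ fully and differentiate with respect to $Z$. The first-order optimality condition reads
\begin{equation}
\label{eq:opt-cond-plan}
\big((ZD^\top - Y_+)(-\Phi_{22}) - \Phi_{12}(-\Phi_{22})^{1/2}\big)D = 0,
\end{equation}
which, using $\Phi_{12}(-\Phi_{22})^{1/2}(-\Phi_{22})^{-1/2} = \Phi_{12}$ style manipulations, rearranges to $Z\, D^\top \Phi_{22} D = (Y_+ \Phi_{22} + \Phi_{12}) D$, i.e. $Z = (\Phi_{12} + Y_+\Phi_{22})D (D^\top \Phi_{22} D)^{-1}$ — with the sign of $\Phi_{22}$ tracked carefully this is exactly the right-hand side of \eqref{eq:chebcent_app}. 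The invertibility of $D^\top \Phi_{22} D = \begin{bmatrix} Y_- \\ U_- \end{bmatrix}\Phi_{22}\begin{bmatrix} Y_- \\ U_- \end{bmatrix}^\top$ follows from $\Phi_{22} < 0$ together with $\rank D = s$.

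Alternatively — and this is probably the cleaner route — I would directly verify that $-N_{12}N_{22}^{-1}$ satisfies the optimality condition \eqref{eq:opt-cond-plan} by substituting the block definitions of $N_{12}$ and $N_{22}$ from \eqref{eq:10}. Expanding \eqref{eq:10} blockwise gives $N_{22} = \begin{bmatrix} -Y_- \\ -U_- \end{bmatrix}\Phi_{22}\begin{bmatrix} -Y_- \\ -U_- \end{bmatrix}^\top = D^\top \Phi_{22} D$ and $N_{12} = (\Phi_{12} + Y_+\Phi_{22}) \begin{bmatrix} -Y_- & -U_- \end{bmatrix}^\top \cdot(\text{signs})$; plugging these into \eqref{eq:opt-cond-plan} should reduce it to an identity. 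The main obstacle I anticipate is purely bookkeeping: getting every sign and every $\Phi_{22}$-versus-$(-\Phi_{22})^{1/2}$ factor to line up correctly between the blockwise expansion of $N$ in \eqref{eq:10}, the least-squares objective in \eqref{eq:GLS}, and the closed form in \eqref{eq:chebcent_app}. There is no conceptual difficulty — it is a standard normal-equations argument — but the three-way consistency of conventions is where an error would creep in, so I would do the block expansion of $N$ first and carefully, then match.
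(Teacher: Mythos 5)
Your approach is correct and essentially the same as the paper's: the paper likewise reduces \eqref{eq:GLS} to a standard full-row-rank linear least-squares problem, with $A=\begin{bmatrix}Y_- \\ U_-\end{bmatrix}(-\Phi_{22})^{\frac{1}{2}}$ and $B=(Y_++\Phi_{12}\Phi_{22}^{-1})(-\Phi_{22})^{\frac{1}{2}}$, and invokes the closed form $BA^\top(AA^\top)^{-1}$, which is exactly the unique normal-equations solution you derive. The only blemish is the sign of the constant term in your displayed objective and optimality condition (it should read $+\Phi_{12}(-\Phi_{22})^{-\frac{1}{2}}$, giving $+\Phi_{12}D$ in the normal equation), but you flag this bookkeeping risk yourself and your final rearranged formula $Z=(\Phi_{12}+Y_+\Phi_{22})D(D^\top\Phi_{22}D)^{-1}$ matches \eqref{eq:chebcent_app}.
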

\begin{proof}
For given $A\in\mathbb{R}^{s\times l}$ of full row rank and $B\in\mathbb{R}^{p\times l}$, we recall that the least-squares problem
\begin{equation}
\label{eq:LS}
\arg\min_{X\in\mathbb{R}^{p\times s}} |XA-B|_\text{F}
\end{equation}
has a unique solution $\hat{X}=BA^\dagger=BA^\top(AA^\top)^{-1}$, see \cite[Eq. (12.12)]{boyd2018introduction}. Now, to show that \eqref{eq:GLS} holds, we take
\begin{equation}
X=\begin{bmatrix}
P & Q
\end{bmatrix},\  A=\begin{bmatrix}
Y_- \\ U_-
\end{bmatrix}(-\Phi_{22})^{\frac{1}{2}},\ \text{and}\ B=(Y_++\Phi_{12}\Phi_{22}^{-1})(-\Phi_{22})^{\frac{1}{2}}.
\end{equation}
\end{proof}

We note that \eqref{eq:GLS} can be seen as a generalized least-squares problem with the bias term $\Phi_{12}\Phi_{22}^{-1}$ taken into account and $\Phi_{22}$ acting as the precision matrix. If \mbox{$\Phi_{22}=-I$} and $\Phi_{12}=0$, then \eqref{eq:GLS} is the solution to the ordinary least-squares problem \mbox{(cf. \cite{fogel1979system,bertsekas1971recursive,milanese1995properties})}.

\subsection{Prior knowledge}

The set of data-consistent systems $\Sigma$ is merely constructed based on the collected data and the known model class. However, incorporating further prior knowledge may result in a smaller estimation error. Let us introduce the set $\Sigma_{\textup{pk}}$ representing any prior knowledge of the true system coefficients. Given $\Sigma$ and $\Sigma_{\textup{pk}}$, the set of data-consistent systems complying with the prior knowledge is $\Sigma\cap\Sigma_{\textup{pk}}$.  In the case that $\Sigma\cap\Sigma_{\textup{pk}}$ is bounded and QMI-induced, the result of Theorem \ref{th:1} can be extended accordingly. Note that prior knowledge could also reduce the rank condition on the data matrix $\begin{bmatrix}
Y_-^\top & U_-^\top
\end{bmatrix}$ as $\Sigma\cap\Sigma_{\textup{pk}}$ may be bounded even if $\Sigma$ is unbounded (see Proposition \ref{prop:xu-b}). 

As an example, suppose that for known matrices $\Omega_2\in\mathbb{R}^{h\times s}$ and $\Omega_1\in\mathbb{R}^{p\times s}$ we have
\begin{equation}
\label{eq:pk1}
\Sigma_{\textup{pk}}\coloneqq\left\{(P,Q):\begin{bmatrix}
P & Q
\end{bmatrix}=H\Omega_2+\Omega_1\ \text{for some}\ H\in\mathbb{R}^{p\times h}\right\}.
\end{equation}
This type of constraint can be used when some columns of $\begin{bmatrix}
P_\textup{true} & Q_\textup{true}
\end{bmatrix}$ are known a priori so that the unknown terms are included in $H$. It follows now from \eqref{eq:Sigma->QMI} and \eqref{eq:pk1} that $(P,Q)\in\Sigma\cap \Sigma_{\textup{pk}}$ is equivalent to $(H\Omega_2+\Omega_1)^\top\in\mathcal{Z}_s(N)$. This inclusion can be written as
\begin{equation}
\label{eq:HinZ}
H^\top\in\mathcal{Z}_{h}(N_{\textup{pk}})
\end{equation}
with $N_{\textup{pk}}$ defined as
\begin{equation}
N_{\textup{pk}}\coloneqq \begin{bmatrix}
I & \Omega_1 \\
0 & \Omega_2
\end{bmatrix}N \begin{bmatrix}
I & \Omega_1 \\
0 & \Omega_2
\end{bmatrix}^\top.
\end{equation}
Therefore, $(P,Q)\in\Sigma\cap \Sigma_{\textup{pk}}$ if and only if \eqref{eq:HinZ} holds. In this case, one can observe that for $\Sigma\cap \Sigma_{\textup{pk}}$ to be bounded, we need $\begin{bmatrix}
Y_-^\top & U_-^\top
\end{bmatrix}\Omega_2^\top$ to be full column rank (cf. Proposition \ref{prop:xu-b}). Having a bounded $\Sigma\cap \Sigma_{\textup{pk}}$, the results of Theorem \ref{th:1} hold for the set of data-consistent systems incorporating prior knowledge \eqref{eq:pk1} by replacing $N$ with $N_{\textup{pk}}$.  

\subsection{The choice of norm}

Theorem \ref{th:1} is applicable to arbitrary unitarily invariant norms. Depending on the application, certain norms could be more relevant than others. For instance, suppose that the goal is to design a robust controller for the set of data-consistent systems. In this case, considering a Chebyshev center $(\hat{P},\hat{Q})$ to be the nominal system with the bound on the uncertainty $(\Delta_P,\Delta_Q)$ given by the Chebyshev radius with respect to the spectral norm, one may check the feasibility to design a robust controller for all systems in the form of $(\hat{P}+\Delta_P,\hat{Q}+\Delta_Q)$. This can be accomplished, e.g., using the results in \cite{khargonekar1990robust}. Such a parametrization can be obtained from the following proposition. Here, $\lambda_\textup{min}(M)$ and $\lambda_\textup{max}(M)$ denote the smallest and largest eigenvalues of a square matrix $M$, respectively. 

\begin{proposition}
\label{prop:spec_norm_rad}
Suppose that $N_{22}<0$. Then, we have
\begin{equation}
\label{eq:cor:sv-1}
\sigma_{1}\left(\begin{bmatrix}
\Delta_P & \Delta_Q
\end{bmatrix}\right)\leq \rho,
\end{equation}
for all $(\Delta_P,\Delta_Q)\in\Sigma-(\hat{P},\hat{Q})$ if and only if
\begin{equation}
\label{eq:cor:sv-2}
\rho \geq\sqrt{\frac{\lambda_{\textup{max}}(N|N_{22})}{\lambda_{\textup{min}}(-N_{22})}}.
\end{equation}
\end{proposition}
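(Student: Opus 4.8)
The plan is to recognize the two-sided equivalence \eqref{eq:cor:sv-1}--\eqref{eq:cor:sv-2} as simply the statement $\rho \ge \rad\mathcal{Z}_s(N)$ read off for the spectral norm, and then to evaluate that Chebyshev radius via Theorem \ref{th:1}. First I would observe that $N\in\pmb{\Pi}_{p,s}$ (as noted after \eqref{eq:10}) and that the hypothesis $N_{22}<0$ places $N$ in $\pmb{\Pi}_{p,s}^-$, so Theorem \ref{th:1} applies to $\mathcal{Z}_s(N)$. From \eqref{eq:chebcent_app}, $\begin{bmatrix}\hat P & \hat Q\end{bmatrix}=-N_{12}N_{22}^{-1}$; transposing and using $N_{21}=N_{12}^\top$ together with symmetry of $N_{22}$, the matrix $\hat Z\coloneqq\begin{bmatrix}\hat P & \hat Q\end{bmatrix}^\top=-N_{22}^{-1}N_{21}$ is exactly the common Chebyshev center of $\mathcal{Z}_s(N)$ produced by Theorem \ref{th:1}.

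Next I would transfer the left-hand condition to $\mathcal{Z}_s(N)$. By the equivalence \eqref{eq:Sigma->QMI}, the transpose map $(P,Q)\mapsto\begin{bmatrix}P & Q\end{bmatrix}^\top$ is a bijection from $\Sigma$ onto $\mathcal{Z}_s(N)$ that sends $(\hat P,\hat Q)$ to $\hat Z$, hence it identifies $\Sigma-(\hat P,\hat Q)$ with $\mathcal{Z}_s(N)-\hat Z$. Since $\sigma_1(\cdot)$ is the spectral norm and is invariant under transposition, \eqref{eq:cor:sv-1} holding for all $(\Delta_P,\Delta_Q)\in\Sigma-(\hat P,\hat Q)$ is the same as $\norm{Z-\hat Z}\le\rho$ for all $Z\in\mathcal{Z}_s(N)$ in the spectral norm. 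As $\hat Z$ is a Chebyshev center and $\mathcal{Z}_s(N)$ is compact (because $N\in\pmb{\Pi}_{p,s}^-$), we have $\max_{Z\in\mathcal{Z}_s(N)}\norm{Z-\hat Z}=\rad\mathcal{Z}_s(N)$, so the condition reduces to $\rho\ge\rad\mathcal{Z}_s(N)$.

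Finally I would compute the radius. The symmetric gauge function of the spectral norm is $g(x)=\max_i|x_i|$, and $s=Lp+(M+1)m\ge p$ gives $\min\{s,p\}=p$, so Theorem \ref{th:1} yields
\begin{equation}
\rad\mathcal{Z}_s(N)=\max_{1\le i\le p}\sigma_i\big((-N_{22})^{-\frac12}\big)\,\sigma_i\big((N|N_{22})^{\frac12}\big).
\end{equation}
Both factors are positive semi-definite, so their singular-value vectors coincide with their nonincreasing eigenvalue vectors; in particular $\sigma_1((-N_{22})^{-\frac12})=1/\sqrt{\lambda_{\textup{min}}(-N_{22})}$ and $\sigma_1((N|N_{22})^{\frac12})=\sqrt{\lambda_{\textup{max}}(N|N_{22})}$. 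Since the termwise product of two nonincreasing nonnegative sequences is maximized at the first index, this gives
\begin{equation}
\rad\mathcal{Z}_s(N)=\sqrt{\frac{\lambda_{\textup{max}}(N|N_{22})}{\lambda_{\textup{min}}(-N_{22})}},
\end{equation}
which is precisely \eqref{eq:cor:sv-2}. I expect the only slightly delicate points to be the dimension bookkeeping that makes $\min\{s,p\}=p$ (so the Hadamard product in Theorem \ref{th:1} sweeps all $p$ eigenvalue pairs) and the elementary observation that the maximal termwise product of two sorted nonnegative sequences occurs at the leading index; everything else is a direct invocation of Theorem \ref{th:1} and the transpose identification \eqref{eq:Sigma->QMI}.
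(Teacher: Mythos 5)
Your proposal is correct and follows essentially the same route as the paper: reduce the equivalence to the statement $\rho\geq\rad\mathcal{Z}_s(N)$ in the spectral norm (using that $\hat{Z}=-N_{22}^{-1}N_{21}$ is a Chebyshev center, so the maximum distance from it over the compact set equals the radius), then evaluate \eqref{eq:Chebrad_app} with $g=\norm{\ \cdot\ }_\infty$ and the eigenvalue identities $\sigma_1((-N_{22})^{-\frac{1}{2}})=1/\sqrt{\lambda_{\textup{min}}(-N_{22})}$ and $\sigma_1((N|N_{22})^{\frac{1}{2}})=\sqrt{\lambda_{\textup{max}}(N|N_{22})}$. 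You are merely more explicit than the paper about the reduction step and the fact that the termwise product of two nonincreasing nonnegative sequences peaks at the leading index, both of which the paper leaves implicit.
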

\begin{proof}
It suffices to show that, with respect to the spectral norm, we have
\begin{equation}
\label{eq:rad_pf_0}
\rad\mathcal{Z}_s(N)=\sqrt{\frac{\lambda_{\textup{max}}(N|N_{22})}{\lambda_{\textup{min}}(-N_{22})}}. 
\end{equation}
Recall that the symmetric gauge function associated with the spectral norm is the infinity norm on $\mathbb{R}^n$. Thus, it follows from \eqref{eq:Chebrad_app} that
\begin{equation}
\label{eq:rad_pf_1}
\rad\mathcal{Z}_s(N)=\sigma_{1}((-N_{22})^{-\frac{1}{2}}) \sigma_1((N|N_{22})^{\frac{1}{2}})).
\end{equation}
Since $N|N_{22}\geq 0$ and $N_{22}<0$, we have
\begin{equation}
\label{eq:rad_pf_2}
\begin{split}
\sigma_1((N|N_{22})^{\frac{1}{2}})&=\lambda_\text{max}((N|N_{22})^{\frac{1}{2}})=\sqrt{\lambda_\text{max}(N|N_{22})}\ \text{ and}\\ 
\sigma_{1}((-N_{22})^{-\frac{1}{2}})&=\lambda_\text{max}((-N_{22})^{-\frac{1}{2}})=\sqrt{\lambda_\text{max}((-N_{22})^{-1})}=\frac{1}{\sqrt{\lambda_\text{min}(-N_{22})}}.
\end{split}
\end{equation}
Now, \eqref{eq:rad_pf_0} follows from \eqref{eq:rad_pf_1} and \eqref{eq:rad_pf_2}. 
\end{proof}

A similar observation to Proposition \ref{prop:spec_norm_rad} was also made in \cite{eising2023data}. For the special case $L=1$, $M=0$, $\Phi_{22}=-I$, and $\Phi_{12}=\Phi_{21}^\top=0$, it is proven in \mbox{\cite[Lem. IV.5]{eising2023data}} that the set of data-consistent systems $\mathcal{Z}_s(N)$ is contained in a ball with radius $\sqrt{\lambda_{\textup{max}}(N | N_{22}) / \lambda_{\textup{min}}(-N_{22})}$ centered at $-N_{22}^{-1}N_{21}$. Interestingly, our result shows that this ball is the one with \emph{smallest} radius containing $\mathcal{Z}_s(N)$.

The majority of the literature on robust control deals with uncertainties that are bounded with respect to the spectral norm. Nevertheless, it was argued in \cite{lee1996quadratic} that in some cases the use of the Frobenius norm is preferred. We show this by means of an example borrowed from \cite{lee1996quadratic}. Suppose that $\Delta_Q=0$ and consider two possibilities for the value of $\Delta_P$:
\begin{equation}
\Delta_P^{(1)}=\begin{bmatrix}
1 & 0 \\
0 & 0
\end{bmatrix}\ \text{ and }\ \Delta_P^{(2)}=\begin{bmatrix}
1 & 0 \\
0 & 1
\end{bmatrix}.
\end{equation}
The matrix $\Delta_P^{(2)}$ has one more nonzero entry compared to $\Delta_P^{(1)}$, which can be seen as a less favorable uncertainty. However, the spectral norms of both $\Delta_P^{(1)}$ and $\Delta_P^{(2)}$ are equal, $\sigma_1(\Delta_P^{(1)})=\sigma_1(\Delta_P^{(2)})=1$. Thus, the difference between $\Delta_P^{(1)}$ and $\Delta_P^{(2)}$ cannot be inferred directly from their spectral norms. In this case, working with the Frobenius norm might be preferable as it reflects such differences in its value, i.e., $|\Delta_P^{(1)}|_\text{F}=1$ and $|\Delta_P^{(2)}|_\text{F}=2$. 

\subsection{Experiment design}

Provided that input-output data are given, one can obtain an estimation for the true system using \eqref{eq:chebcent_app}, and characterize the estimation error using \eqref{eq:Chebrad_app} and  \eqref{eq:inner_app}. However, in case there are no data (or the given data do not satisfy the desired worst-case error), one can think of collecting (more) data such that the desired accuracy is achieved. For this, given $\rho>0$ and a norm $\norm{\ \cdot\ }$, the goal is to find a finite-length input signal by which the data generated by the true system result in $\rad\mathcal{Z}_s(N) \leq \rho$. This is referred to as \emph{experiment design}. 

It was shown in \cite{coulson2022quantitative} that using some prior knowledge of the true system, for a given $\delta>0$, one can find a sequence of inputs such that the data generated by the true system satisfy
\begin{equation}
\sigma_{s}\left(\begin{bmatrix}
Y_-^\top & U_-^\top
\end{bmatrix}\right)\geq \delta.
\end{equation}
Here, we show that this result from \cite{coulson2022quantitative} can be used in experiment design for a desired identification accuracy with respect to arbitrary unitarily invariant norms. To that end, first, we study the relation between $\sigma_{s}\left(\begin{bmatrix}
Y_-^\top & U_-^\top
\end{bmatrix}\right)$ and $\rad\mathcal{Z}_s(N)$. 

\begin{proposition}
\label{prop:SNR_1}
Suppose that $\begin{bmatrix}
Y_-^\top & U_-^\top
\end{bmatrix}$ has full column rank. Then, for a unitarily invariant norm with the associated symmetric gauge \mbox{function $g$}, we have 
\begin{equation}
\label{eq:prop_rad_upper}
\rad\mathcal{Z}_s(N)\leq \frac{\sqrt{\sigma_1(\Phi|\Phi_{22})}}{\sigma_{s}\left(\begin{bmatrix}
Y_-^\top & U_-^\top
\end{bmatrix}\right)}g(\mathbf{1}),
\end{equation}
where $\mathbf{1}\in\mathbb{R}^p$ denotes the vector with all entries equal to $1$. 
\end{proposition}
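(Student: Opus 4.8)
The plan is to reduce the claim to a bound on the \emph{spectral-norm} Chebyshev radius, which can be read off from the noise-model matrix $\Phi$ and the data matrix $\begin{bmatrix} Y_-^\top & U_-^\top\end{bmatrix}$. First, by \eqref{eq:Chebrad_app} (equivalently Theorem \ref{th:1}), $\rad\mathcal{Z}_s(N)=g(u)$ where $u\coloneqq\sigma_{[p]}((-N_{22})^{-\frac{1}{2}})\circ\sigma((N|N_{22})^{\frac{1}{2}})$ is a vector in $\mathbb{R}^p$ (note $s=Lp+(M+1)m\geq p$, so $p$ is the relevant dimension and $\mathbf{1}\in\mathbb{R}^p$). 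Every entry of $u$ is at most $\rho_\infty\coloneqq\sigma_1((-N_{22})^{-\frac{1}{2}})\sigma_1((N|N_{22})^{\frac{1}{2}})$, so $u\prec_w\rho_\infty\mathbf{1}$, and Ky Fan's lemma (monotonicity of symmetric gauge functions under weak majorization, as invoked in the proof of Lemma \ref{lem:maj}) gives $\rad\mathcal{Z}_s(N)\leq\rho_\infty g(\mathbf{1})$. Since $N|N_{22}\geq 0$ and $-N_{22}>0$, we have $\rho_\infty=\sqrt{\lambda_{\textup{max}}(N|N_{22})/\lambda_{\textup{min}}(-N_{22})}$, so it suffices to prove $\lambda_{\textup{max}}(N|N_{22})\leq\sigma_1(\Phi|\Phi_{22})$ and $\lambda_{\textup{min}}(-N_{22})\geq\sigma_s^2(\begin{bmatrix} Y_-^\top & U_-^\top\end{bmatrix})$.

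For the numerator I would establish the stronger inequality $N|N_{22}\leq\Phi|\Phi_{22}$. From \eqref{eq:10}, $N=G\Phi G^\top$ with $G\coloneqq\begin{bmatrix} I & Y_+ \\ 0 & -D\end{bmatrix}$ and $D\coloneqq\begin{bmatrix} Y_- \\ U_-\end{bmatrix}$. Since $\Phi_{22}<0$ and $\begin{bmatrix} Y_-^\top & U_-^\top\end{bmatrix}$ has full column rank, $N_{22}=D\Phi_{22}D^\top<0$, so both generalized Schur complements are ordinary Schur complements and admit the variational description: for $v\in\mathbb{R}^p$, $v^\top(N|N_{22})v=\max_{w\in\mathbb{R}^s}\begin{bmatrix} v\\w\end{bmatrix}^\top N\begin{bmatrix} v\\w\end{bmatrix}$. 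Substituting $N=G\Phi G^\top$ and using $G^\top\begin{bmatrix} v\\w\end{bmatrix}=\begin{bmatrix} v\\Y_+^\top v-D^\top w\end{bmatrix}$, the lower block ranges over a subset of $\mathbb{R}^{T-L+1}$ as $w$ varies, so this quantity is at most $\max_{b\in\mathbb{R}^{T-L+1}}\begin{bmatrix} v\\b\end{bmatrix}^\top\Phi\begin{bmatrix} v\\b\end{bmatrix}=v^\top(\Phi|\Phi_{22})v$, again by the variational description (valid because $\Phi_{22}<0$). Hence $N|N_{22}\leq\Phi|\Phi_{22}$; as both sides are positive semidefinite, $\lambda_{\textup{max}}(N|N_{22})\leq\lambda_{\textup{max}}(\Phi|\Phi_{22})=\sigma_1(\Phi|\Phi_{22})$. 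For the denominator, \eqref{eq:10} gives $-N_{22}=D(-\Phi_{22})D^\top$; under the canonical normalization of the energy bound $\Phi_{22}=-I$ (so $-\Phi_{22}\geq I$) this is $\geq DD^\top$, whence $\lambda_{\textup{min}}(-N_{22})\geq\lambda_{\textup{min}}(DD^\top)=\sigma_s^2(\begin{bmatrix} Y_-^\top & U_-^\top\end{bmatrix})$. Combining the three estimates yields \eqref{eq:prop_rad_upper}.

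The step I expect to be the main obstacle is the monotonicity $N|N_{22}\leq\Phi|\Phi_{22}$; the variational argument above is the cleanest route, but one may instead prove it by an explicit computation, factoring $\Phi=\begin{bmatrix} I & \Phi_{12}\Phi_{22}^{-1} \\ 0 & I\end{bmatrix}\!\begin{bmatrix}\Phi|\Phi_{22} & 0\\0 & \Phi_{22}\end{bmatrix}\!\begin{bmatrix} I & 0 \\ \Phi_{22}^{-1}\Phi_{21} & I\end{bmatrix}$, conjugating by $G$, and reading off $N|N_{22}=\Phi|\Phi_{22}-E(-\Phi_{22})^{\frac{1}{2}}(I-P)(-\Phi_{22})^{\frac{1}{2}}E^\top$ with $E\coloneqq Y_++\Phi_{12}\Phi_{22}^{-1}$ and $P\coloneqq(-\Phi_{22})^{\frac{1}{2}}D^\top(D(-\Phi_{22})D^\top)^{-1}D(-\Phi_{22})^{\frac{1}{2}}$ an orthogonal projection, so that $I-P\geq 0$. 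A secondary point to watch is the normalization of $\Phi_{22}$: for a general $\Phi_{22}<0$ the same argument only gives $\lambda_{\textup{min}}(-N_{22})\geq\lambda_{\textup{min}}(-\Phi_{22})\sigma_s^2(\begin{bmatrix} Y_-^\top & U_-^\top\end{bmatrix})$, i.e., the bound \eqref{eq:prop_rad_upper} with an extra factor $\lambda_{\textup{min}}(-\Phi_{22})^{-1/2}$, so the constant as stated presupposes $-\Phi_{22}\geq I$.
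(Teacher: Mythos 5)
Your proof is correct and follows the same overall skeleton as the paper's: bound the Chebyshev radius by the spectral quantity $\sqrt{\lambda_{\textup{max}}(N|N_{22})/\lambda_{\textup{min}}(-N_{22})}$ times $g(\mathbf{1})$ via weak majorization and Ky Fan's lemma, and then establish $N|N_{22}\leq\Phi|\Phi_{22}$. The one place you diverge is in how that monotonicity is proved: the paper writes out the explicit identity $N|N_{22}=\Phi|\Phi_{22}+\begin{bmatrix}I\\ Y_+^\top\end{bmatrix}^\top\begin{bmatrix}\Phi_{12}\\ \Phi_{22}\end{bmatrix}\Psi\begin{bmatrix}\Phi_{12}\\ \Phi_{22}\end{bmatrix}^\top\begin{bmatrix}I\\ Y_+^\top\end{bmatrix}$ and argues $\Psi\leq 0$ by a Schur complement, whereas you use the variational characterization $v^\top(N|N_{22})v=\max_w\begin{bmatrix}v\\ w\end{bmatrix}^\top N\begin{bmatrix}v\\ w\end{bmatrix}$ together with $N=G\Phi G^\top$, which avoids the computation entirely and makes the inequality transparent (the inner maximization for $N$ ranges over a subset of the feasible directions for $\Phi$). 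Both are valid; your route is arguably cleaner, and your fallback factorization is essentially the paper's identity in disguise. Your closing remark about the normalization of $\Phi_{22}$ is a genuine and worthwhile catch: the paper's proof ends by asserting $\sigma_s\bigl(\begin{bmatrix}Y_-^\top & U_-^\top\end{bmatrix}\bigr)=\sqrt{\sigma_s(-N_{22})}$, which holds when $\Phi_{22}=-I$ but not for a general $\Phi_{22}<0$, since $-N_{22}=D(-\Phi_{22})D^\top$; in general one only gets $\lambda_{\textup{min}}(-N_{22})\geq\lambda_{\textup{min}}(-\Phi_{22})\,\sigma_s^2\bigl(\begin{bmatrix}Y_-^\top & U_-^\top\end{bmatrix}\bigr)$, so the stated constant indeed presupposes $-\Phi_{22}\geq I$, exactly as you say.
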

\begin{proof}
We observe that
\begin{equation}
N|N_{22}=\Phi|\Phi_{22}+\begin{bmatrix}
I \\ Y_+^\top
\end{bmatrix}^\top \begin{bmatrix}
\Phi_{12} \\ \Phi_{22}
\end{bmatrix}\Psi\begin{bmatrix}
\Phi_{12} \\ \Phi_{22}
\end{bmatrix}^\top\begin{bmatrix}
I \\ Y_+^\top
\end{bmatrix},
\end{equation}
where
\begin{equation}
\Psi=\Phi_{22}^{-1}-\begin{bmatrix}
Y_- \\ U_-
\end{bmatrix}^\top\left(\begin{bmatrix}
Y_- \\ U_-
\end{bmatrix}\Phi_{22}\begin{bmatrix}
Y_- \\ U_-
\end{bmatrix}^\top\right)^{-1}\begin{bmatrix}
Y_- \\ U_-
\end{bmatrix}.
\end{equation}
Since $\Phi_{22}<0$, using a Schur complement argument, one can see that $\Psi\leq 0$. Thus,
\begin{equation}
\label{eq:pf_N|N_{22}}
N|N_{22}\leq\Phi|\Phi_{22}.
\end{equation}
This implies that $\sigma_1(N|N_{22})\leq\sigma_1(\Phi|\Phi_{22})$. Therefore, we have
\begin{equation}
\sigma(N|N_{22})\prec_w \sigma_1(N|N_{22}) \mathbf{1} \prec_w \sigma_1(\Phi|\Phi_{22}) \mathbf{1}.
\end{equation}
Hence,
\begin{equation}
\sigma((N|N_{22})^{\frac{1}{2}})\prec_w \sqrt{\sigma_1(\Phi|\Phi_{22})} \mathbf{1}.
\end{equation}
We further observe that the largest singular value of $(-N_{22})^{-\frac{1}{2}}$ is equal to $1/\sqrt{\sigma_s(-N_{22})}$. Thus, we have
\begin{equation}
\sigma_{[p]}((-N_{22})^{-\frac{1}{2}})\prec_w \frac{1}{\sqrt{\sigma_s(-N_{22})}} \mathbf{1}.
\end{equation}
Therefore,
\begin{equation}
\sigma_{[p]}((-N_{22})^{-\frac{1}{2}})\circ \sigma((N|N_{22})^{\frac{1}{2}})\prec_w \sqrt{\frac{\sigma_1(\Phi|\Phi_{22})}{\sigma_s(-N_{22})}} \mathbf{1}.
\end{equation}
Now, it follows from \eqref{eq:Chebrad_app} and Ky Fan's lemma \cite[Prop. 4.B.6]{marshall2011inequalities} that
\begin{equation}
\rad\mathcal{Z}_s(N)\leq \sqrt{\frac{\sigma_1(\Phi|\Phi_{22})}{\sigma_s(-N_{22})}}g(\mathbf{1}).
\end{equation}
This, together with $\sigma_{s}\left(\begin{bmatrix}
Y_-^\top & U_-^\top
\end{bmatrix}\right)=\sqrt{\sigma_s(-N_{22})}$, completes the proof.
\end{proof}

As an example, suppose that the norm is the spectral norm. In addition, suppose that the noise model is given by $\Phi_{11}=\varepsilon^2 I$, $\Phi_{12}=0$, and $\Phi_{22}=-I$. In that case, we have $g(\mathbf{1})=1$ and $\sigma_1(\Phi|\Phi_{22})=\varepsilon^2$. It follows now from \eqref{eq:prop_rad_upper} that
\begin{equation}
\rad\mathcal{Z}_s(N)\leq \frac{\varepsilon}{\sigma_{s}\left(\begin{bmatrix}
Y_-^\top & U_-^\top
\end{bmatrix}\right)}.
\end{equation}
For this particular case, a similar observation was also made in \cite[Ex. 1]{coulson2022quantitative}.

Inequality \eqref{eq:prop_rad_upper} represents the effect of a \emph{signal-to-noise ratio} on the identification accuracy with respect to an arbitrary unitarily invariant norm. It follows now from this result that, in the presence of noise, the desired identification accuracy $\rad\mathcal{Z}_s(N)\leq\rho$ is met if the data satisfy
\begin{equation}
\label{eq:rad_s2n-1}
 \sigma_{s}\left(\begin{bmatrix}
Y_-^\top & U_-^\top
\end{bmatrix}\right) \geq \frac{\sqrt{\sigma_1(\Phi|\Phi_{22})}}{\rho}g(\mathbf{1}).
\end{equation}
Having this relation, one can use \cite[Thm. 6]{coulson2022quantitative} to design an experiment for identification with a desired accuracy. This result quantifies how large $\sigma_{s}\left(\begin{bmatrix}
Y_-^\top & U_-^\top
\end{bmatrix}\right)$ should be in order to achieve a desired identification accuracy. 

The role of the smallest singular value of the data matrix $\begin{bmatrix}
Y_-^\top & U_-^\top
\end{bmatrix}$ in the identification accuracy is revealed in Proposition \ref{prop:SNR_1}. Now, the following proposition shows that the largest singular value of this matrix also contributes to the volume of the set of data-consistent systems by affecting the radius of the largest inner ball. 

\begin{proposition}
\label{prop:SNR_2}
Suppose that $\begin{bmatrix}
Y_-^\top & U_-^\top
\end{bmatrix}$ has full column rank and the norm is unitarily invariant and normalized. Then, we have 
\begin{equation}
\label{eq:prop_rad_in_upper}
\rad_\textup{in}\mathcal{Z}_s(N)\leq \frac{\sqrt{\sigma_1(\Phi|\Phi_{22})}}{\sigma_{1}\left(\begin{bmatrix}
Y_-^\top & U_-^\top
\end{bmatrix}\right)}.
\end{equation}
\end{proposition}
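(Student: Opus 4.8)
The plan is to specialize Theorem~\ref{th:2} to $\Pi=N$, then bound the two factors of the resulting closed form separately, closely mirroring the proof of Proposition~\ref{prop:SNR_1}. Since $\begin{bmatrix} Y_-^\top & U_-^\top\end{bmatrix}$ has full column rank we have $N_{22}<0$ (cf.\ Proposition~\ref{prop:xu-b}), so $N\in\pmb{\Pi}_{p,s}^-$ and \eqref{eq:inner_app} applies; as the norm is normalized, $g(e_1)=1$ and
\[
\rad_{\textup{in}}\mathcal{Z}_s(N)=\sigma_*\big((N|N_{22})^{1/2}\big)\,\sigma_s\big((-N_{22})^{-1/2}\big).
\]
It therefore suffices to prove the two one-sided bounds $\sigma_*\big((N|N_{22})^{1/2}\big)\le\sqrt{\sigma_1(\Phi|\Phi_{22})}$ and $\sigma_s\big((-N_{22})^{-1/2}\big)\le 1/\sigma_1\big(\begin{bmatrix} Y_-^\top & U_-^\top\end{bmatrix}\big)$ and multiply them.

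For the first factor I would invoke the Schur-complement identity already derived in the proof of Proposition~\ref{prop:SNR_1}, namely $N|N_{22}=\Phi|\Phi_{22}+[\,\cdot\,]\,\Psi\,[\,\cdot\,]^\top$ with $\Psi\le 0$, which gives $0\le N|N_{22}\le\Phi|\Phi_{22}$, see \eqref{eq:pf_N|N_{22}}. Monotonicity of the largest eigenvalue under the positive semi-definite order then yields $\sigma_*(N|N_{22})\le\sigma_1(N|N_{22})=\lambda_{\textup{max}}(N|N_{22})\le\lambda_{\textup{max}}(\Phi|\Phi_{22})=\sigma_1(\Phi|\Phi_{22})$, and taking square roots gives $\sigma_*\big((N|N_{22})^{1/2}\big)=\sqrt{\sigma_*(N|N_{22})}\le\sqrt{\sigma_1(\Phi|\Phi_{22})}$.

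For the second factor, $-N_{22}>0$, so the smallest singular value of $(-N_{22})^{-1/2}$ equals $1/\sigma_1\big((-N_{22})^{1/2}\big)=1/\sqrt{\sigma_1(-N_{22})}$. Here one uses the analogue, for the \emph{largest} singular value, of the identity $\sigma_s\big(\begin{bmatrix} Y_-^\top & U_-^\top\end{bmatrix}\big)=\sqrt{\sigma_s(-N_{22})}$ that closes the proof of Proposition~\ref{prop:SNR_1}: reading off from the block form \eqref{eq:10} of $N$ that $-N_{22}=\begin{bmatrix} Y_-\\ U_-\end{bmatrix}(-\Phi_{22})\begin{bmatrix} Y_-\\ U_-\end{bmatrix}^\top$, one obtains $\sigma_1(-N_{22})=\sigma_1\big(\begin{bmatrix} Y_-^\top & U_-^\top\end{bmatrix}\big)^2$, hence $\sigma_s\big((-N_{22})^{-1/2}\big)=1/\sigma_1\big(\begin{bmatrix} Y_-^\top & U_-^\top\end{bmatrix}\big)$. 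Multiplying the two bounds then yields \eqref{eq:prop_rad_in_upper}.

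I do not expect a genuine obstacle here: the numerator estimate is inherited essentially verbatim from the proof of Proposition~\ref{prop:SNR_1}, and the rest is elementary bookkeeping with singular values and square roots. The one step carrying content is the second one — extracting the extreme singular values of $-N_{22}$ from the block structure \eqref{eq:10} of $N$ — and it is precisely the $\sigma_1$-counterpart of a step already carried out in the proof of Proposition~\ref{prop:SNR_1}.
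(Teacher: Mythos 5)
Your argument is essentially identical to the paper's proof: it specializes \eqref{eq:inner_app} with $g(e_1)=1$, bounds $\sigma_*(N|N_{22})\leq\sigma_1(N|N_{22})\leq\sigma_1(\Phi|\Phi_{22})$ via \eqref{eq:pf_N|N_{22}}, and identifies $\sigma_s((-N_{22})^{-\frac{1}{2}})=1/\sigma_1\bigl(\begin{bmatrix}Y_-^\top & U_-^\top\end{bmatrix}\bigr)$ from the block form \eqref{eq:10}, exactly as the paper does. The only caveat—shared with the paper's own proof of this and of Proposition \ref{prop:SNR_1}—is that the identity $\sigma_1(-N_{22})=\sigma_1\bigl(\begin{bmatrix}Y_-^\top & U_-^\top\end{bmatrix}\bigr)^2$ tacitly uses $\Phi_{22}=-I$ (for general $\Phi_{22}<0$ one only gets an inequality up to $\sigma_1(-\Phi_{22})$), so this is not a defect of your proposal relative to the paper.
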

\begin{proof}
It follows from \eqref{eq:pf_N|N_{22}} that $\sigma_*(N|N_{22})\leq\sigma_1(N|N_{22})\leq \sigma_1(\Phi|\Phi_{22})$. We also observe that
\begin{equation}
\sigma_s((-N_{22})^{-\frac{1}{2}})=\frac{1}{\sigma_1((-N_{22})^{\frac{1}{2}})}=\frac{1}{\sigma_{1}\left(\begin{bmatrix}
Y_-^\top & U_-^\top
\end{bmatrix}\right)}.
\end{equation}
Substituting this into \eqref{eq:inner_app} yields \eqref{eq:prop_rad_in_upper}.
\end{proof}

\subsection{Examples}
\label{sec:VI}

In what follows, we consider two examples\footnote{The MATLAB codes for the examples of this section can be found at \href{https://github.com/a-shakouri/set-membership-system-identification}{https://github.com/a-shakouri/set-membership-system-identification}.}. The first one is a single-input single-output (SISO) system where the set of data-consistent systems is an ellipsoid in $\mathbb{R}^4$ allowing for a better demonstration of the introduced notions. The second example is a MIMO system, by which the effect of signal-to-noise ratio (SNR) on the Chebyshev radii with respect to several norms is shown through simulations. 

\subsubsection{Mass-spring-damper}

Consider the model of the mass-spring-damper system in Fig. \ref{fig:msd}, discretized using the zero-order hold method with a sample time of $1\ \textup{s}$. Here, $y$ denotes the position of the mass with respect to the equilibrium, and $u$ denotes the external force. The true parameters of the system are $m=2\ \textup{kg}$, $k=3\ \textup{N/m}$, and $c=1\ \textup{Ns/m}$. The mass position $y(t)$ is measured through a noisy experiment during a time horizon of $5$ seconds. For such a system, we consider the following model:
\begin{equation}
y(t+2)=P_0 y(t)+P_1 y(t+1)+Q_0 u(t) +Q_1 u(t+1)+w(t).
\end{equation}
The true system coefficients are $P_0=-0.6065$, $P_1=0.5659$, $Q_0=0.1583$, and \mbox{$Q_1=0.1886$}. The process noise is assumed to satisfy the energy bound \mbox{$\sum_{t=0}^3(w(t)-0.005)^2\leq 10^{-4}$} that can be captured by \eqref{eq:ass1-1}, $W_-^\top\in\mathcal{Z}_4(\Phi)$ and $\Phi\in\pmb{\Pi}_{1,4}^-$, with $\Phi_{11}=0$, $\Phi_{22}=-I_4$, and $\Phi_{12}=\Phi_{21}^\top=0.005\begin{bmatrix}
1 & 1 & 1 & 1
\end{bmatrix}$. 

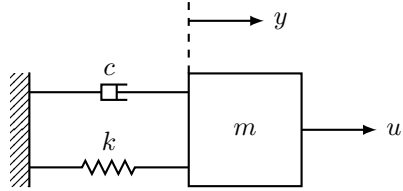
\begin{figure}[h]
\centering
\begin{tikzpicture}
 \tikzstyle{spring}=[thick,decorate,decoration={zigzag,pre length=0.7cm,post length=0.7cm,segment length=5}]
 \tikzstyle{damper}=[thick,decoration={markings, mark connection node=dmp, mark=at position 0.5 with 
   {
     \node (dmp) [thick,inner sep=0pt,transform shape,rotate=-90,minimum width=5pt,minimum height=5pt,draw=none] {};
     \draw [thick] ($(dmp.north east)+(4pt,0)$) -- (dmp.south east) -- (dmp.south west) -- ($(dmp.north west)+(4pt,0)$);
     \draw [thick] ($(dmp.north)+(0,-3pt)$) -- ($(dmp.north)+(0,3pt)$);
   }
 }, decorate]
 \tikzstyle{ground}=[fill,pattern=north east lines,draw=none,minimum
 width=1pt,minimum height=0.3cm]
 \node[draw,ground,thick] (M1) [minimum width=0.1cm, minimum height=1.5cm] {};
 \node[draw,outer sep=0pt,thick] (M2) at (3,0) [minimum width=1.5cm, minimum height=1.5cm] {$m$};
 \draw[spring] ($(M1.east) - (0,0.5)$) -- ($(M2.west) - (0,0.5)$) 
 node [midway,above=0.1cm] {$k$};
 \draw[damper] ($(M1.east) + (0,0.5)$) -- ($(M2.west) + (0,0.5)$)
 node [midway,above=0.1cm] {$c$};
 \draw[thick] ($(M1.north east)$) -- ($(M1.south east)$);
 \draw[thick, dashed] ($(M2.north west)$) -- ($(M2.north west) + (0,1)$);
 \draw[thick, -latex] ($(M2.north west) + (0,0.7)$) -- ($(M2.north west) + (1,0.7)$) node [right] {$y$};
  \draw[thick, -latex] ($(M2.east) + (0,0)$) -- ($(M2.east) + (1,0)$) node [right] {$u$};
 \end{tikzpicture}
	\caption{Mass-spring-damper system for Example 1.}
	\label{fig:msd}
\end{figure}

During the experiment, starting from $y(0)=0$, $y(1)=1$, we apply the input sequence $u(0)=1$, $u(2)=0$, $u(3)=-1$, $u(4)=0$, and $u(5)=1$, which makes $U_-$ equal to
\begin{equation}
U_-=\begin{bmatrix}
1 & 0 & -1 & 0 \\
 0 & -1 & 0 & 1 \\
\end{bmatrix}.
\end{equation}
Now, consider a simulation with the noise sequence
\begin{equation}
W_-=\begin{bmatrix}
0.0105 & -0.0013 & 0.0092 & 0.0084
\end{bmatrix},
\end{equation}
for which the collected output data are
\begin{equation}
\begin{split}
Y_-&=\begin{bmatrix}
0 & 1 & 0.7347 & -0.3807 \\
1 & 0.7347 & -0.3807 & -0.8101
\end{bmatrix}, \\
Y_+&=\begin{bmatrix}
0.7347 & -0.3807 & -0.8101 & -0.0305
\end{bmatrix}.
\end{split}
\end{equation}
For this case, the Chebyshev center \eqref{eq:chebcent_app} is
\begin{equation}
\hat{P}=\begin{bmatrix}
-0.6065 & 0.5659
\end{bmatrix},\hspace{0.25cm} \hat{Q}=\begin{bmatrix}
0.1583 & 0.1886
\end{bmatrix},
\end{equation}
and the Chebyshev radius in terms of the Euclidean norm is $\rad\mathcal{Z}_4(N)=0.0458$. For the described simulation, the set of data-consistent systems, the true system, and the Chebyshev center \eqref{eq:chebcent_app} are shown in Fig. \ref{fig:ex1}. 

To visualize the results beyond a single simulation, we obtain 1000 simulations for uniformly distributed random sequences of noise normalized to comply with \mbox{$W_-^\top\in\mathcal{Z}_4(\Phi)$}. The Chebyshev centers \eqref{eq:chebcent_app} are obtained for each simulation and are shown by the gray points in Fig. \ref{fig:ex2}.

\begin{figure}
    \centering
    \includegraphics[width=0.75\columnwidth]{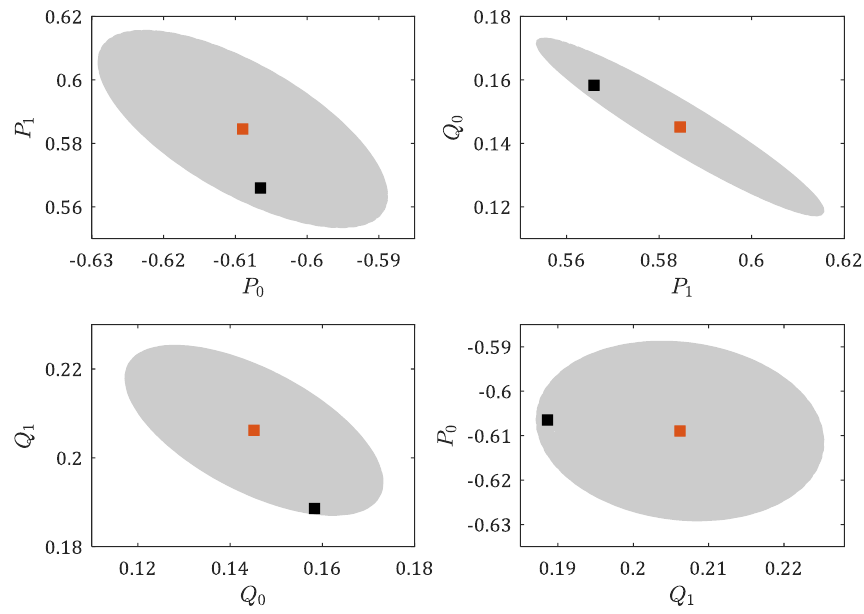}
    \caption{The set of data-consistent systems is shown by the gray area. The red and black squares show the Chebyshev center and the true system, respectively.}
    \label{fig:ex1}
\end{figure}

\begin{figure}
    \centering
    \includegraphics[width=0.75\columnwidth]{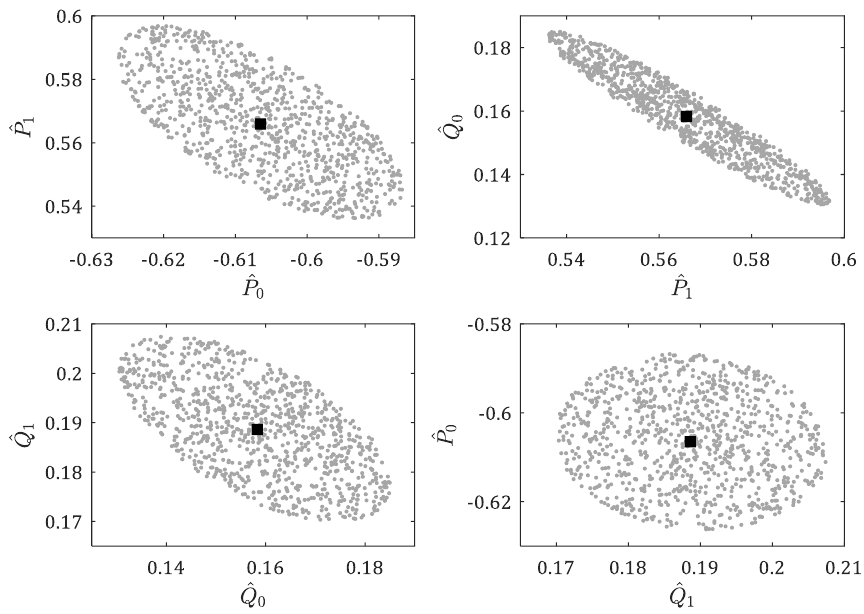}
    \caption{The gray points show the Chebyshev centers obtained for different sequences of the noise. The black square shows the true system.}
    \label{fig:ex2}
\end{figure}

\subsubsection{Resistor-capacitor circuit}

Consider the model of the resistor-capacitor (RC) circuit in Fig. \ref{fig:ex3}, discretized using the zero-order hold method with a sample time of $1\ \textup{s}$. Here, the output variables $y_1$, $y_2$, and $y_3$ are the voltages across the capacitors $C_1$, $C_2$, and $C_3$, respectively, and the input variables $u_1$ and $u_2$ are the voltage sources. The true parameters of the system are $R_1=7\ \Omega$, $R_2=5\ \Omega$, $R_3=10\ \Omega$, $R_4=15\ \Omega$, $C_1=0.5\ \textup{F}$, $C_2=0.4\ \textup{F}$, and $C_3=0.6\ \textup{F}$. We collect the input-output data for a duration of $10$ seconds, starting from $y(0)=0$, where the noise sequence is assumed to satisfy $W_-^\top\in\mathcal{Z}_{10}(\Phi)$ and $\Phi\in\pmb{\Pi}_{3,10}^-$ with $\Phi_{11}=10^{-6} I_3$, $\Phi_{22}=-I_{10}$, and $\Phi_{12}=\Phi_{21}^\top=0$. Let the SNR between the input and the noise be defined as $\sigma_m(U_-)/\sigma_1(W_-)$. We run $1000$ simulations per SNR for a range of SNR from $1$ to $10^3$. For each simulation, uniformly distributed random sequences of noise and input are generated and then normalized, respectively, to attain the required noise bound $\sigma_1(W_-)\leq10^{-3}$ and the SNR value. 

Fig. \ref{fig:ex3-2} shows the Chebyshev radii with respect to the nuclear, spectral, and Frobenius norms. Fig. \ref{fig:ex3-3} shows the radius of the largest ball within the set of data-consistent systems, $\rad_\textup{in} \mathcal{Z}_s(N)$, which has the same value for nuclear, spectral, and Frobenius norms as they are all normalized, i.e., they satisfy $g(e_1)=1$. In both figures, the solid curves are the mean values, and the shaded areas indicate the standard deviation computed through 1000 simulations per SNR. These simulations demonstrate the results of Propositions~\ref{prop:SNR_1}~and~\ref{prop:SNR_2}, where it was shown that both $\rad \mathcal{Z}_s(N)$ and $\rad_\text{in} \mathcal{Z}_s(N)$ admit upper bounds in terms of an SNR measure. 

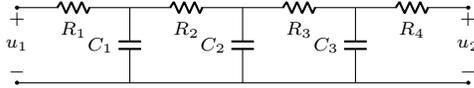
\begin{figure}[h]
	\centering
	\begin{circuitikz}[american, scale = 1.5, font=\scriptsize]
		\ctikzset{bipoles/length=5mm} 
		\draw (0,1) to[open,v=$u_{1}$] (0,0);
		\draw (0,1) to[short, *-] (0,1);
        \draw (0,1) to [R, V_=$R_1$,R] (1.5,1);
        \draw (1.5,1) to [R, V_=$R_2$,R] (3,1);
        \draw (3,1) to [R, V_=$R_3$,R] (4.5,1);
        \draw (4.5,1) to [R, V_=$R_4$, R] (6,1);
        \draw (6,1) to[short, -*] (6,1);
        \draw (6,1) to[open,v=$u_{2}$] (6,0);
        \draw (1.5,1) to [C, V_=$C_1$, C] (1.5,0);
        \draw (3,1) to [C, V_=$C_2$, C] (3,0);
        \draw (4.5,1) to [C, V_=$C_3$, C] (4.5,0);
        \draw (0,0) to[short, *-] (0,0);
        \draw (0,0) to (6,0);
        \draw (6,0) to[short, -*] (6,0);
	\end{circuitikz}
	\caption{RC circuit for Example 2.}
	\label{fig:ex3}
\end{figure} 

\begin{figure}
    \centering
    \includegraphics[width=0.75\columnwidth]{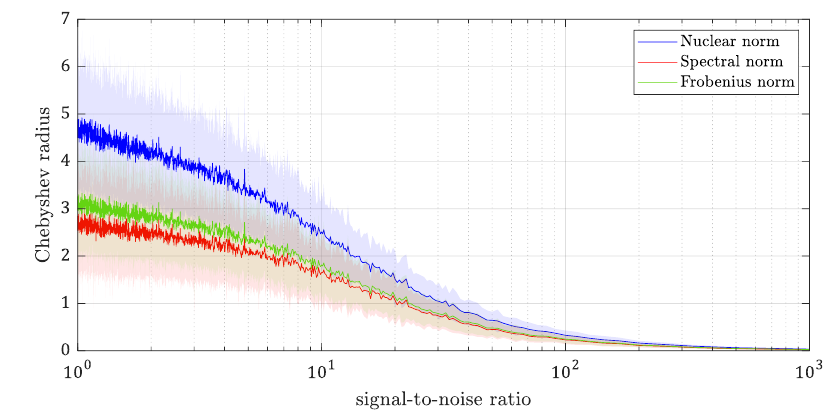}
    \caption{The Chebyshev radii with respect to different norms. The curves indicate the mean values with the shaded area indicating the standard deviation over 1000 simulations per SNR.}
    \label{fig:ex3-2}
\end{figure}

\begin{figure}
    \centering
    \includegraphics[width=0.75\columnwidth]{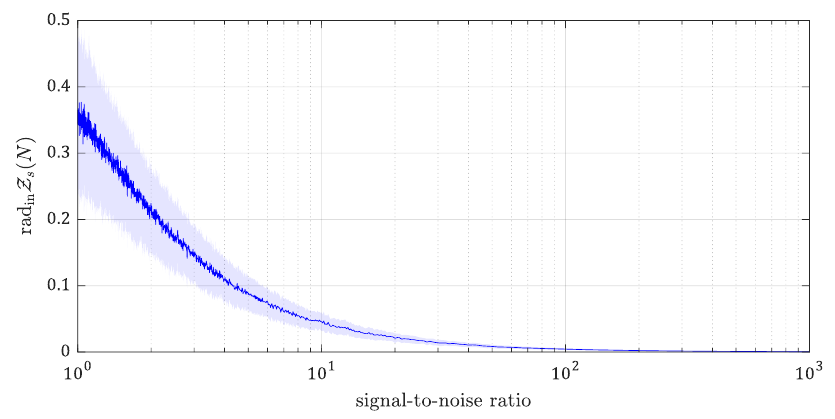}
    \caption{The radius of the largest ball within the set of data-consistent systems. The curves indicate the mean values with the shaded area indicating the standard deviation over 1000 simulations per SNR.}
    \label{fig:ex3-3}
\end{figure}



\section{Conclusions}
\label{sec:VII}
In this work, we have formulated analytic expressions for Chebyshev centers, the Chebyshev radii, and the diameters of certain types of QMI-induced sets with respect to arbitrary unitarily invariant matrix norms. It has been shown that the presented Chebyshev center is the only common center among all matrix norms. We also presented the radius of a largest ball within a QMI-induced set. We discussed the applications of the provided results in data-driven modeling and control of unknown linear time-invariant systems. A notion that has not been fully studied in this work is the \emph{volume} of the QMI-induced sets, which can be a subject for future studies. Moreover, future work could focus on applications of the obtained results in experiment design for system identification with prescribed accuracy. 

\backmatter

\bmhead{Acknowledgements}

Henk J. van Waarde acknowledges financial support by the Dutch Research Council under the NWO Talent Programme Veni Agreement (VI.Veni.22.335).

\section*{Declarations}

Not applicable. 






\bibliography{sn-bibliography}

\end{document}